\documentclass[english,11pt]{smfart}
  \usepackage{amsmath,amssymb,smfthm}
  \usepackage{mathrsfs}
  \usepackage{graphicx,color}

  \newcommand{\wassd}{\mathop \mathrm{W}\nolimits}
  \newcommand{\mdim}{\mathop \mathrm{mdim}_M\nolimits}
  \newcommand{\entropy}{\mathop \mathrm{h}_{\mathrm{top}}\nolimits}
  \newcommand{\supp}{\mathop \mathrm{supp}\nolimits}

  \newcommand{\hcube}{\mathop \mathrm{HC}\nolimits}
  \newcommand{\bcube}{\mathop \mathrm{BC}\nolimits}
  \newcommand{\diam}{\mathop \mathrm{diam}\nolimits}
  \newcommand{\crit}{\mathop \mathrm{crit}\nolimits}
  \newcommand{\udim}{\mathop {\operatorname{\overline{M}-dim}}\nolimits}
  \newcommand{\ldim}{\mathop {\operatorname{\underline{M}-dim}}\nolimits}
  \newcommand{\ucrit}{\mathop {\operatorname{\overline{M}-crit}}\nolimits}
  \newcommand{\lcrit}{\mathop {\operatorname{\underline{M}-crit}}\nolimits}

  \title[Generalized Hausdorff dimension]{A generalization of Hausdorff 
     dimension applied to Hilbert cubes and Wasserstein spaces}

  \author{Beno\^{\i}t Kloeckner}

\begin{document}

\begin{abstract}
A Wasserstein space is a metric space of sufficiently concentrated
probability measures over a general metric space. The main goal
of this paper is to estimate the largeness of Wasserstein spaces, in a sense to be 
made precise.

In a first part, we generalize the Hausdorff dimension by defining
a family of bi-Lipschitz invariants, called critical parameters,
that measure largeness for 
infinite-dimensional metric spaces. Basic properties of these invariants 
are given, and they are
estimated for a naturel set of spaces generalizing the usual Hilbert cube.
These invariants are very similar to concepts initiated by Rogers,
but our variant is specifically suited to tackle Lipschitz comparison.

In a second part, we estimate the value of these new invariants in the
case of some Wasserstein spaces, as well as the dynamical complexity of 
push-forward maps. The lower bounds rely on several embedding
results; for example we provide uniform bi-Lipschitz embeddings of all powers
of any space inside its Wasserstein space and 
we prove that the Wasserstein space of a $d$-manifold has ``power-exponential''
critical parameter equal to $d$.
These arguments are very easily adapted to study the space of closed subsets of a 
compact metric space, partly generalizing results of Boardman, Goodey and McClure.
\end{abstract}

\maketitle

\section{Introduction}

This article is motivated by the geometric study of Wasserstein spaces;
these are spaces of probability measures over a metric space, which are often
infinite-dimensional for any sensible definition of dimension (in particular
Hausdorff dimension). This statement seemed to deserve to be made quantitative,
and very few relevant invariants seemed available. We shall therefore
develop such tools in a first part, then apply them to Wasserstein
spaces via embedding results in a second part.

\subsection{A generalization of Hausdorff dimension: 
critical parameters}

The construction of Hausdorff dimension relies on a family of functions,
namely $(r\mapsto r^s)_s$, and one can wonder what happens when this family
is replaced by another one. This is exactly what we do: we give conditions
on a family of functions (then called a \emph{scale}) ensuring that a family of 
measures obtained by the so-called Carath\'eodory construction
from these functions behave more or less like Hausdorff 
measures do. In particular
these criterions ensure the existence of a \emph{critical parameter} that plays
the role of Hausdorff dimension, and the Lipschitz invariance of 
this parameter. It follows that any bi-Lipschitz embedding
of a space into another implies an inequality between their critical parameters.
We shall use three main scales relevant for increasingly large spaces:
the \emph{polynomial} scale, which defines the Hausdorff dimension;
the \emph{intermediate} scale and the \emph{power-exponential} scale.
We shall say for example that a space has intermediate size if it has a
non-extremal critical parameter in the intermediate scale, which implies
that it has infinite Hausdorff dimension and minimal critical parameter
in the power-exponential scale.

This line of ideas is far from being new: Rogers' book 
\cite{Rogers} shows that this kind of constructions were well understood
forty years ago. Several works have considered
infinite-dimensional metric spaces, mostly the set of
closed subsets of the interval, and determined for some functions
whether they lead to zero or infinite measures; see in particular 
\cite{Boardman,Goodey2,McClure}.

Concerning the definition
of critical parameters, our main point is to stress conditions ensuring
their bi-Lipschitz invariance. But the real contribution of this paper lies
in the computation of critical parameters for a variety of spaces,
partly generalizing the above papers.

Hausdorff dimension is easy to interpret because the Eulidean spaces can
be used for size comparison. There is a natural family of spaces that
can play the same role for some families of critical parameter: Hilbert cubes.
Given an $\ell^2$ sequence of positive real numbers 
$\bar a=(a_n)_{n\in \mathbb{N}}$ (the classical choice being $a_n=1/n$),
let $\hcube(I;\bar a)$ be the set of all sequences $\bar u$ such that 
$0\leqslant u_n\leqslant a_n$ for all $n$, and endow it with the $\ell^2$ metric. 
Here $I$ stands for the unit interval, and the construction generalizes
to any compact metric space $X$: the (generalized) Hilbert cube 
$\hcube(X;\bar a)$ is the set of sequences $\bar x=(x_n)\in X^{\mathbb{N}}$
endowed with the metric
\[d_{\bar a}(\bar x,\bar y) := \left(\sum_{n=1}^\infty a_n^2 d(x_n,y_n)^2\right)^{1/2}\] 

The main results of the first part are estimations of the critical parameters
of generalized Hilbert cubes. In particular, we prove that under
positive and finite dimensionality hypotheses, $\hcube(X,\bar a)$
has intermediate size if $\bar a$ decays exponentially, and has power-exponential
size if $\bar a$ decays polynomially. 

To illustrate this, let us give a consequence of our estimations.
\begin{coro}\label{coro:application}
Let $X,Y$ be any two compact metric spaces, assume $X$ has
positive Hausdorff dimension and $Y$ has finite upper Minkowski dimension,
and consider two exponents $\alpha<\beta\in(1/2,+\infty)$.
 
Then there is no bi-Lipschitz embedding
$\hcube(X;(n^{-\alpha})) \hookrightarrow \hcube(Y;(n^{-\beta}))$.
\end{coro}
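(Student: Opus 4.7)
The plan is to apply the bi-Lipschitz invariance of critical parameters to the main estimations of the first part, which pin down the power-exponential critical parameter of $\hcube(Z;(n^{-\gamma}))$ when the base $Z$ has positive Hausdorff dimension and finite upper Minkowski dimension. First, I would note that the $\ell^2$ condition $\alpha,\beta>1/2$ ensures both $\hcube(X;(n^{-\alpha}))$ and $\hcube(Y;(n^{-\beta}))$ are genuine metric spaces, so it is meaningful to discuss bi-Lipschitz embeddings between them; any such embedding would, by the Lipschitz invariance established for critical parameters in the power-exponential scale, yield an inequality between the critical parameters of source and target.

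Next, I would invoke the main Hilbert-cube theorems twice. Under the positive Hausdorff dimension hypothesis on $X$, these should furnish a lower bound on the power-exponential critical parameter of $\hcube(X;(n^{-\alpha}))$, depending only on $\alpha$ to leading order. Under the finite upper Minkowski dimension hypothesis on $Y$, they should furnish a matching upper bound on that of $\hcube(Y;(n^{-\beta}))$. Heuristically the $\epsilon$-covering number of $\hcube(Z;(n^{-\gamma}))$ grows like $\exp(C(Z,\gamma)\,\epsilon^{-1/\gamma}\log\epsilon^{-1})$ when $Z$ has positive finite dimension, so that the critical value in the power-exponential scale is governed by a strictly decreasing function $\Phi(\gamma)$ of the decay exponent, with the multiplicative shape constant $C(Z,\gamma)$ absorbed into sub-leading corrections.

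Given these bounds, the comparison becomes immediate: since $\alpha<\beta$ and $\Phi$ is strictly decreasing, the lower bound $\Phi(\alpha)$ for the critical parameter of the $\alpha$-cube is strictly larger than the upper bound $\Phi(\beta)$ for the critical parameter of the $\beta$-cube. This contradicts the inequality forced by the existence of a bi-Lipschitz embedding, and therefore rules out any such embedding.

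The main obstacle I foresee is precisely the strictness of this comparison. One must check that the main theorems yield \emph{matching} lower and upper bounds at the same value $\Phi(\gamma)$, so that the dependence on the concrete base (via $\dim X$ or $\dim Y$) is pushed entirely into lower-order terms and does not interfere with the decay exponent. This is the crucial feature of the power-exponential scale that permits a clean corollary for \emph{any} admissible pair $(X,Y)$; verifying that the Hilbert-cube computations of the first part have been arranged to produce this sharp form is where the real work lies.
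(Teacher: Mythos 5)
Your proposal is correct and follows the paper's own route: Proposition \ref{prop:polydecay} gives the matching bounds $\frac{2}{2\gamma-1}\leqslant\crit_{\mathscr{P}}\hcube(Z;(n^{-\gamma}))\leqslant\ucrit_{\mathscr{P}}\hcube(Z;(n^{-\gamma}))\leqslant\frac{2}{2\gamma-1}$ under the positive-Hausdorff-dimension and finite-upper-Minkowski-dimension hypotheses respectively, and Lipschitz monotonicity then forces $\frac{2}{2\alpha-1}\leqslant\frac{2}{2\beta-1}$, i.e.\ $\beta\leqslant\alpha$, exactly as you describe. The only slip is in your heuristic aside: the strictly decreasing function is $\Phi(\gamma)=2/(2\gamma-1)$ coming from the $\ell^2$ structure of the Hilbert cube, not $1/\gamma$, but this does not affect the argument.
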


This non-embedding result, as well as a similar result described
below, is different in nature to the celebrated results of
Bourgain \cite{Bourgain} (a regular tree admits no bi-Lipschitz
embedding into a Hilbert space), Pansu \cite{Pansu} and
Cheeger and Kleiner \cite{Cheeger-Kleiner} (the Heisenberg group
admits no bi-Lipschitz into a finite-dimensional Banach space nor into $L^1$).
These results involve the fine structure of metric spaces, while our approach
is much cruder: all our non-embedding results come from one space being 
simply too big to fit into another.

Our methods are similar to those used in Hausdorff dimension
theory: we rely on Frostman's Lemma, which says that in order to bound from 
below the critical parameter it is sufficient to exhibit a measure whose local 
behavior is controlled by one of the scale functions, and on an analogue of 
Minkowski dimension, which gives upper bounds.

This analogue might be considered the most straightforward manner to measure
the largeness of a compact space: it simply encodes
the asymptotics of the minimal size of 
an $\varepsilon$-covering when $\varepsilon$ go to zero.
However, the Minkowski dimension has some undesirable behavior, notably with 
respect to
countable unions; this already makes Hausdorff dimension more satisfactory,
and the same argument applies in favor of our critical parameters.

Whatever scale is used, the construction of critical parameters relies on
the existence for all $\varepsilon>0$ of at least one covering of the space by a
sequence of parts $E_n$ whose diameter is at most $\varepsilon$ and goes to
$0$ when $n$ goes to $\infty$. This property has been studied under the names 
``small ball property'' and ``largest Hausdorff dimension'', see
the works of Goodey \cite{Goodey}, Bandt \cite{Bandt} and
Behrends and Kadets \cite{Behrends-Kadets}. In particular, it is proved
in \cite{Goodey} and \cite{Behrends-Kadets}
that the unit ball of an infinite-dimensional Banach space never has the 
small ball property. As a consequence, our critical parameters cannot be
used to measure the largeness of Banach spaces, apart from the obvious
relation between Hausdorff dimension and linear dimension of finite-dimensional
Banach spaces.

\subsection{Largeness of Wasserstein spaces}

The second part of this article is part of a series, 
partly joint with J\'er\^ome Bertrand,
in which we study some intrinsic geometric properties
of the \emph{Wasserstein spaces} $\mathscr{W}_p(X)$ of a metric space
$(X,d)$. These spaces of measures
are in some sense geometric measure theory versions of $L^p$ spaces 
(see Section \ref{sec:recalls} for precise definitions).
Here we evaluate the largeness of Wasserstein spaces, mostly
via embedding results.

Other authors have worked on related topics, for example
Lott \cite{Lott}, who computed the curvature of Wasserstein spaces over 
manifolds (see also Takatsu \cite{Takatsu}), and Takatsu and Yokota
\cite{Takatsu-Yokota} who studied the case when $X$ is a metric cone.

Several embedding and non-embedding results are proved in previous articles
for special classes of spaces $X$, in the most important case $p=2$.
On the first hand, it is easy to see that if $X$ contains a complete geodesic
(that is, an isometric embedding of $\mathbb{R}$),
then $\mathscr{W}_2(X)$ contains isometric embeddings of
open Euclidean cone of arbitrary dimension \cite{Kloeckner}.
In particular it contains isometric
embeddings of Euclidean balls of arbitrary dimension and radius,
and bi-Lipschitz embeddings of $\mathbb{R}^k$ for all $k$.
On the other hand, if $X$ is negatively curved and simply connected, 
$\mathscr{W}_2(X)$ does not contain
any isometric embedding of $\mathbb{R}^2$ \cite{Bertrand-Kloeckner}.

\subsubsection{Embedding powers}

First we describe a bi-Lipschitz embedding of $X^k$. This power set
can be endowed with several equivalent metrics,
for example
\[d_p\big(\bar x=(x_1,\ldots,x_k)\,,\, \bar y=(y_1,\ldots,y_k)\big)=
  \left(\sum_{i=1}^k d(x_i,y_i)^p\right)^{1/p}\]
and
\[d_\infty\big(\bar x, \bar y\big)=
  \max_{1\leqslant i\leqslant k} d(x_i,y_i)\]
which come out naturally in the proof; moreover $d_\infty$ is 
well-suited to the dynamical application below.

\begin{theo}\label{theo:embedding}
Let $X$ be any metric space, $p\in[1,\infty)$ and $k$ be any positive integer.
There exists a map $f:X^k\to\mathscr{W}_p(X)$ such that 
for all $\bar x,\bar y\in X^k$:
\[ \frac{1}{k(2^k-1)^{\frac1p}}\, d_p(\bar x,\bar y)\leqslant \wassd_p(f(\bar x), f(\bar y)) 
  \leqslant \left(\frac{2^{k-1}}{2^k-1}\right)^{\frac1p} d_p(\bar x,\bar y)\]
and that intertwines dynamical systems in the following sense:
given any measurable self-map $\varphi$ of $X$,
denoting by $\varphi_k$ the induced map on $X^k$ and 
by $\varphi_\#$ the induced map on measures, it holds
\[f\circ\varphi_k=\varphi_\#\circ f.\]
\end{theo}
Note that since $d_\infty\leqslant d_p\leqslant k^{\frac1p} d_\infty$
similar bounds hold with $d_\infty$; in fact the lower bound
that comes from the proof is in terms of $d_\infty$ and is slightly better:
\[ \frac{1}{k^{1-\frac1p}(2^k-1)^{\frac1p}}\, d_\infty(\bar x,\bar y)
\leqslant \wassd_p(f(\bar x), f(\bar y)).\]
This result is proved in Section \ref{sec:proof}.

We shall see in Section \ref{sec:constants}
that the constants cannot be improved much for general spaces, but
that for some specific spaces, a bi-Lipschitz map with a lower bound
polynomial in $k$ can be constructed. This map however does not enjoy
the intertwining property.

The explicit constants in Theorem \ref{theo:embedding} can be used to 
get information on largeness in the Minkowski sense only,
since critical parameters are designed not to grow under
countable unions. Let us give a more dynamical application
that uses the intertwining property in a crucial way. 
\begin{coro}\label{coro:dynamics}
If $X$ is compact and $\varphi:X\to X$ is a continuous map with
positive topological entropy,
then $\varphi_\#$ has positive metric mean dimension. More precisely
\[\mdim(\varphi_\#,\wassd_p)\geqslant p\frac{\entropy(\varphi)}{\log 2}.\]
\end{coro}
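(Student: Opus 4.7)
The plan is to transport the dynamical complexity of $(X,\varphi)$ into $(\mathscr{W}_p(X),\varphi_\#)$ using the embeddings from Theorem \ref{theo:embedding} with $k$ tending to infinity, matching the exponential growth in $k$ against the logarithmic blow-up of the embedding's Lipschitz constant. Write $s_n(T,\rho,\varepsilon)$ for the maximal cardinality of an $(n,\varepsilon)$-separated set in the Bowen metric of $(Y,T,\rho)$ and set $H(T,\rho,\varepsilon) := \limsup_{n\to\infty} \tfrac{1}{n}\log s_n(T,\rho,\varepsilon)$, so that $\entropy(\varphi) = \lim_{\varepsilon\to 0} H(\varphi,d,\varepsilon)$ and $\mdim(\varphi_\#,\wassd_p) = \liminf_{\delta\to 0} H(\varphi_\#,\wassd_p,\delta)/|\log \delta|$.

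Fix $\varepsilon_0 > 0$ and let $E \subset X$ be a maximal $(n,\varepsilon_0)$-separated subset for $\varphi$. The product $E^k \subset X^k$ is then $(n,\varepsilon_0)$-separated for the diagonal map $\varphi_k$ in the Bowen metric derived from $d_\infty$, because any two distinct $k$-tuples differ in at least one coordinate. Applying $f = f_k$ from Theorem \ref{theo:embedding}, the intertwining identity $f\circ\varphi_k = \varphi_\#\circ f$ combined with the $d_\infty$-version of the lower Lipschitz bound yields a $(n,\delta_k)$-separated subset $f(E^k) \subset \mathscr{W}_p(X)$ of cardinality $|E|^k$ for $\varphi_\#$, where
\[\delta_k := \frac{\varepsilon_0}{k^{1-1/p}(2^k-1)^{1/p}}.\]
Taking $\tfrac{1}{n}\log$ of both sides and $\limsup_{n\to\infty}$ gives $H(\varphi_\#,\wassd_p,\delta_k) \geqslant k\,H(\varphi,d,\varepsilon_0)$.

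Since $|\log \delta_k| = (k\log 2)/p + O(\log k)$, for any small $\delta > 0$ I choose $k$ with $\delta_{k+1} \leqslant \delta < \delta_k$, and use that $\varepsilon \mapsto H(T,\rho,\varepsilon)$ is non-increasing to obtain
\[\frac{H(\varphi_\#,\wassd_p,\delta)}{|\log\delta|} \geqslant \frac{k\,H(\varphi,d,\varepsilon_0)}{|\log\delta_{k+1}|} \xrightarrow[k\to\infty]{} \frac{p\,H(\varphi,d,\varepsilon_0)}{\log 2}.\]
Taking $\liminf$ in $\delta$ and then letting $\varepsilon_0 \to 0$ so that $H(\varphi,d,\varepsilon_0) \uparrow \entropy(\varphi)$ delivers the announced inequality.

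The substance of the argument is the dynamical transport step: without the intertwining property in Theorem \ref{theo:embedding}, one would only know that $\mathscr{W}_p(X)$ is geometrically at least as big as each product $X^k$, with no dynamical content, whereas here the $k$ independent copies of $\varphi$ acting on $E^k$ genuinely become distinct trajectories of $\varphi_\#$. The remaining scale-matching is essentially arithmetic: the combinatorial gain of a factor $k$ in the exponent cancels exactly against the dominant term $k\log 2/p$ coming from $(2^k-1)^{1/p}$, producing the prefactor $p/\log 2$.
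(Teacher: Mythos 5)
Your argument is correct and is essentially the paper's own proof: both exploit that $E^k$ is $(n,\varepsilon_0)$-separated for $\varphi_k$ in the $d_\infty$ Bowen metric, push it through $f$ using the intertwining identity and the lower Lipschitz bound, and then match $k\sim p|\log\delta|/\log 2$ so that the factor $k$ in the exponent cancels the $(2^k-1)^{1/p}$ loss. The only (immaterial) differences are that you fix the separation scale $\varepsilon_0$ in $X$ and interpolate in $\delta$ via monotonicity of $H$, whereas the paper fixes $\varepsilon$ in $\mathscr{W}_p(X)$ and chooses $k=\lfloor \beta p(-\log\varepsilon)/\log 2\rfloor$, and that you use the sharper $k^{1-1/p}(2^k-1)^{1/p}$ constant, which changes nothing asymptotically.
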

Metric mean dimension is a metric invariant of dynamical systems
that refines entropy for infinite-entropy ones, introduced by
Lindenstrauss and Weiss \cite{Lindenstrauss-Weiss} in link with mean dimension,
a topological invariant. The definition of $\mdim$ is recalled in Section
\ref{sec:dynamics}.

Note that the constant in Corollary \ref{coro:dynamics} is not optimal in
the case of multiplicative maps $\times d$ acting on the circle: in
\cite{Kloeckner2} we prove the lower bound $p(d-1)$ (instead of
$p\log_2 d$ here).

It is a natural question to ask whether the (topological) mean dimension of
$\varphi_\#$ is positive
as soon as $\varphi$ has positive entropy. To determine this at least 
for some map $\varphi$ would be interesting.

\subsubsection{Embedding Hilbert cubes}

Since embedding powers cannot be enough to estimate critical parameters,
we shall embed Hilbert cubes in Wasserstein spaces.
From now on, we restrict to quadratic cost (similar results
probably hold for other exponents, up to replacing 
Hilbert cubes by $\ell^p$ analogues).

\begin{theo}\label{theo:HC1}
Given any $\lambda\in(0,1/3)$ and any compact metric space
$X$, there is a continuous map $g : \hcube(X,(\lambda^n))\to \mathscr{W}_2(X)$
that is sub-Lipschitz: for some $C>0$,
\[\wassd_2(g(\bar x),g(\bar y)) \geqslant \frac{d(\bar x, \bar y)}{C}.\]
\end{theo}
The embedding we construct here is not bi-Lipschitz, but this does not matter
to get lower bounds on critical parameters.

For rectifiable enough spaces, we can use the self-similarity of the Euclidean space
to get a much stronger statement.
\begin{theo}\label{theo:HC2}
Let $X$ be any Polish metric space that admits a bi-Lipschitz embedding of a 
Euclidean cube
$I^d$ (\textit{e.g.} any manifold of dimension $d$),
and let $(a_n)$ be any $\ell^{\frac{2d}{d+2}}$ sequence of positive numbers. 
Then there is a bi-Lipschitz embedding of $\hcube(I^d,(a_n))$ into
$\mathscr{W}_2(X)$.
\end{theo}

The embedding theorems \ref{theo:HC1} and \ref{theo:HC2} have
consequences in terms of critical parameters (defined precisely in part 
\ref{part:critical}).

\begin{prop}\label{prop:largeness1}
If $X$ is any compact metric space of positive Hausdorff dimension, then
$\mathscr{W}_2(X)$ has at least intermediate size, and more precisely
\[\crit_{\mathscr{I}}\mathscr{W}_2(X)\geqslant 2,\quad
  \crit_{\mathscr{I}_2}\mathscr{W}_2(X)\geqslant \frac{\dim X}{2\log\frac{1}{3}}.\]
\end{prop}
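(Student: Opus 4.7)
The plan is to combine Theorem~\ref{theo:HC1} with the critical-parameter estimates for generalized Hilbert cubes established in the first part of the paper. The key preliminary observation is that a sub-Lipschitz map transfers lower bounds on critical parameters: if $g:A\to B$ satisfies $d_B(g(\bar x),g(\bar y))\geqslant d_A(\bar x,\bar y)/C$, then any $\varepsilon$-cover of $g(A)$ pulls back through $g^{-1}$ to a $C\varepsilon$-cover of $A$. Consequently the Carath\'eodory sums defining the Hausdorff-type outer measures on $A$ are controlled, up to the constant multiplicative factor imposed by $C$, by those on $g(A)$. Since every scale considered in the first part is Lipschitz-invariant, such a multiplicative factor does not move the critical parameter, and one obtains the general principle $\crit_\mathscr{S} B\geqslant \crit_\mathscr{S} A$ for each admissible scale $\mathscr{S}$.

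Fix $\lambda\in(0,1/3)$. Theorem~\ref{theo:HC1} furnishes a sub-Lipschitz map $g:\hcube(X,(\lambda^n))\to\mathscr{W}_2(X)$, so by the principle just stated it suffices to lower-bound the intermediate critical parameters of $\hcube(X,(\lambda^n))$ and then let $\lambda\to 1/3$. The relevant computation is exactly what the first part of the paper provides: under the assumption $\dim X>0$, a generalized Hilbert cube with exponentially decaying weights has intermediate size, with a universal lower bound $\crit_\mathscr{I}\geqslant 2$ and with $\crit_{\mathscr{I}_2}\geqslant \dim X/(2|\log\lambda|)$. The Hilbert cube lower bounds are themselves obtained via Frostman's Lemma applied to a product-type measure whose factors are Frostman measures on $X$ rescaled according to the weights $\lambda^n$, so that the local dimension profile matches the shape of the scale function.

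The main obstacle is therefore essentially external to this proposition: everything reduces to the intermediate-scale Hilbert cube estimate proved in the first part, which is the technical core of the paper. What is left here is purely formal bookkeeping of two constants---the sub-Lipschitz constant $C$ produced by Theorem~\ref{theo:HC1} and the quantity $|\log\lambda|$ appearing in the Hilbert cube estimate---together with the verification that taking $\lambda\nearrow 1/3$ yields precisely the displayed bound for $\crit_{\mathscr{I}_2}$. Both points are immediate once the scale definitions are unrolled, so the proof of the proposition itself is just the concatenation of Theorem~\ref{theo:HC1}, the Hilbert cube estimate, and the sub-Lipschitz monotonicity of critical parameters.
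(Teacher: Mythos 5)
Your proposal is correct and follows exactly the paper's own route: the sub-Lipschitz embedding of Theorem~\ref{theo:HC1}, the exponential-decay Hilbert cube estimate (Proposition~\ref{prop:expdecay}), the Lipschitz monotonicity of critical parameters, and the limit $\lambda\to 1/3$. (Your formulation $\dim X/(2|\log\lambda|)$ also makes the limiting constant $\dim X/(2\log 3)$ explicit, which is the intended reading of the displayed bound.)
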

This estimate is very far from being sharp for many spaces,
 but it has the advantage to be completely general.

The second embedding result gives a much more precise statement
when $X$ is sufficiently regular.
\begin{theo}\label{theo:manifolds}
If $X$ is a compact $d$-dimensional manifold (or any compact space having upper-Minkowski dimension
$d$ and admitting a bi-Lipschitz embedding of $I^d$),
then $\mathscr{W}_2(X)$ has power-exponential size, and more precisely
\[\crit_{\mathscr{P}}\mathscr{W}_2(X) = d.\]
\end{theo}
The upper and lower bound are proved independently under partial hypotheses,
see Propositions
\ref{prop:largeness2} and \ref{prop:upper}. A direct consequence
of Theorem \ref{theo:manifolds} is that if $X,X'$ are $d,d'$-dimensional 
manifolds with $d>d'$, then there exists no bi-Lipschitz embedding
from $\mathscr{W}_2(X)$ to $\mathscr{W}_2(X')$.

A surprise about the proof is that the methods for the upper
and the lower bound are very different and can both seem quite rough 
(see the proofs in section
\ref{sec:largeness}), but they nevertheless give the same order of magnitude.
The fact that the power-exponential critical parameter of the Wasserstein
space coincide with the dimension of the original space in the case of manifolds
is an indication that the power-exponential scale is relevant.

It is an open problem to find a relevant ``uniform'' probability measure on
$\mathscr{W}_2(X)$ (see \cite{vonRenesse-Sturm}).
Knowing the critical parameter of a space, the Carath\'eodory construction
provides a Hausdorff-like measure, which unfortunately need not be 
finite positive. One could hope to find a function such that the
Carath\'eodory construction leads to a finite positive measure, which
would then be a natural candidate to uniformity, in particular because
the construction depends only on the geometry of the space.
Our result, while far from answering the question, at least gives an idea
of the infinitesimal behavior of any such candidate: the desired function
should be very roughly of the order of magnitude of $r\mapsto \exp(-(1/r)^d)$
when $X$ is a $d$-manifold. However, it is unlikely that the Carath\'eodory
construction can be used to produce such a measure. In the quite similar case
of the space of closed subset of the interval, endowed with the Hausdorff metric,
it has indeed been proved \cite{Boardman} that no function yields a Hausdorff
like measure that is both positive and $\sigma$-finite. It would be interesting
to determine whether this result holds in the case of Wasserstein spaces.

\subsection{Largeness of closed subset spaces}

The same methods used on Wasserstein spaces can also be used to
study the space of closed subsets of a compact metric space.
We shall end the paper in section \ref{sec:Hausdorff} with the proof
of the following result.

\begin{theo}\label{theo:Hausdorff}
Let $X$ be a compact $d$-manifold (or any compact space having upper-Minkowski dimension
$d$ and admitting a bi-Lipschitz embedding of $I^d$).
Then the space $\mathscr{C}(X)$ of closed subsets of $X$, endowed with the
Hausdorff metric, has power-exponential size and more precisely
\[\crit_{\mathscr{P}}\mathscr{C}(X) = d.\]
\end{theo}

This result should be compared with those of Boardman \cite{Boardman}
and Goodey \cite{Goodey2}, which together give a refinement of Theorem 
\ref{theo:Hausdorff} when $X=[0,1]$, and of McClure \cite{McClure}
which applies to self-similar subsets of Euclidean space
that satisfy a strong separation property.

\subsubsection*{Acknowledgements} I warmly thank Antoine Gournay for
a very interesting discussion and for
introducing me to metric mean dimension, Greg Kuperberg who
suggested me that Hausdorff dimension could be generalized,
and an anonymous referee for his numerous comments that 
greatly improved the paper.

\part{A generalization of Hausdorff dimension: 
critical parameters}\label{part:critical}

\section{Carath\'eodory's construction and scales}

In this section we consider metric spaces $X$, $Y$
(assumed to be Polish, that is complete and separable, to
avoid any measurability issue) and we use the letters
$A,B$ to denote subsets of $X$.

\subsection{Carath\'eodory's construction of measures}

The starting point of our invariant is a classical construction due to
Carath\'eodory (see \cite{Mattila} for references and proofs)
that we quickly review. The idea is to
count the number of elements in coverings of $A$ by small sets $E_i$,
weighting each set by a function of its diameter.

Let $f:[0,T)\to [0,+\infty)$ be a continuous non-decreasing function
such that $f(0)=0$. Given a subset $A$ of $X$, one defines a Borel
measure by
\[\Lambda_f(A)=\lim_{\delta\to 0} \ \inf\left\{\sum_{i=1}^\infty f(\diam E_i) \,\middle|\,
A\subset \cup E_i,\ \diam E_i\leqslant\delta,\ E_i \mbox{ closed}\right\} \]
where the limit exists since the infimum is monotone. If $f(x)=x^s$, 
$\Lambda_f$ is the $s$-dimensional Hausdorff measure (up to normalization).

We shall say that $(E_i)$ is a closed covering of $A$ if it is a covering
by closed elements, and a $\delta$-covering if all $E_i$ have diameter at most
$\delta$.

\subsection{Scales and critical parameters}

We shall perform Cara\-th\'eo\-dory's construction for a family of functions,
and we need some conditions to ensure that a sharp phase transition occurs.
\begin{defi}
A \emph{scale} is a family $\mathscr{F}$ of continuous
non-decreasing functions
$f_s:[0,T_s)\to[0,+\infty)$ such that $f_s(0)=0$, where the parameter $s$ runs over
an interval $I\subset\mathbb{R}$, and which satisfies the following \emph{separation
property}:
\[\forall t>s\in I,\ \forall C\geqslant 1,\quad f_t(C r)=o_{r\to0}(f_s(r)).\]
\end{defi}

The following families are the scales we shall use below.
The \emph{polynomial scale} (or dimensional scale) is
\[\mathscr{D}:=\left(r\mapsto r^s\right)_{s\in (0,+\infty)}\]
and its critical parameter (to be defined below) is Hausdorff dimension.
The \emph{intermediate scales} (or power-log-exponential scales)
are divided into a \emph{coarse} scale
\[\mathscr{I}:=\left(r\mapsto 
  e^{-\left(\log \frac1r\right)^s}\right)_{s\in [1,+\infty)}\]
and, for each $\sigma\in [1,+\infty)$ a \emph{fine} scale
\[\mathscr{I}_{\sigma}:=\left(r\mapsto 
  e^{-s\left(\log \frac1r\right)^\sigma}\right)_{s\in (0,+\infty)}\]
note that $\mathscr{I}_1=\mathscr{D}$.
The \emph{power-exponential scale} is
\[\mathscr{P}:=\left(r\mapsto e^{-\left(\frac1r\right)^s}\right)_{s\in(0,+\infty)}\]
The parameter $s=1$ corresponds to exponential size; while one
could consider giving a more precise scale in this case, 
the family $(r\mapsto \exp(-s/r))_s$ does
not define one: it does not satisfy the separation
property, and would lead to a 
critical parameter that is not bi-Lipschitz invariant.

Consider a scale $\mathscr{F}=(f_s)_{s\in I}$ and a subset $A$ of $X$.
We have, like in the case of Hausdorff measures and with the same proof
(using the separation property only with $C=1$):
\begin{lemm}
For all parameters $t>s\in I$, if $\Lambda_{f_t}(A)>0$ then
$\Lambda_{f_s}(A)=+\infty$.
\end{lemm}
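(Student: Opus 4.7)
The plan is to mimic the classical Hausdorff measure argument, using the separation property (with $C=1$) as the replacement for the inequality $r^t \leqslant \delta^{t-s} r^s$ valid for $r \leqslant \delta$.

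First, fix $t > s$ in $I$. By the separation property with $C = 1$, for every $\varepsilon > 0$ there exists $\delta_0 \in (0, \min(T_s, T_t))$ such that $f_t(r) \leqslant \varepsilon f_s(r)$ for all $r \in (0, \delta_0]$. (One must also note that $f_t(0) = f_s(0) = 0$, so the inequality trivially extends to $r = 0$.)

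Next, take any closed $\delta$-covering $(E_i)$ of $A$ with $\delta \leqslant \delta_0$. Summing the pointwise inequality on diameters gives
\[\sum_{i=1}^\infty f_t(\diam E_i) \leqslant \varepsilon \sum_{i=1}^\infty f_s(\diam E_i).\]
Taking the infimum over all such coverings yields $\Lambda_{f_t}^{\delta}(A) \leqslant \varepsilon \, \Lambda_{f_s}^{\delta}(A)$, where $\Lambda_{f}^{\delta}$ denotes the $\delta$-pre-measure appearing inside the limit defining $\Lambda_f$. Letting $\delta \to 0$ (both pre-measures are monotone in $\delta$) gives
\[\Lambda_{f_t}(A) \leqslant \varepsilon \, \Lambda_{f_s}(A).\]

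Finally, assume for contradiction that $\Lambda_{f_s}(A) < +\infty$. Then the displayed inequality, valid for every $\varepsilon > 0$, forces $\Lambda_{f_t}(A) = 0$, contradicting the hypothesis $\Lambda_{f_t}(A) > 0$. Hence $\Lambda_{f_s}(A) = +\infty$.

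I do not foresee a real obstacle: the entire content is packaged into the separation property, which is precisely the abstract analogue of what makes the Hausdorff dimension phase transition sharp. The only mild care needed is to verify that one is allowed to pass the pointwise inequality $f_t \leqslant \varepsilon f_s$ through the infimum and the $\delta \to 0$ limit, which is immediate since all quantities involved are non-negative and one restricts attention to coverings with $\diam E_i \leqslant \delta \leqslant \delta_0$.
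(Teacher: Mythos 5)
Your proof is correct and is exactly the standard Hausdorff-measure argument that the paper invokes without writing out (it explicitly says the lemma follows "with the same proof, using the separation property only with $C=1$"). The one point needing care — that $\delta_0$ depends on $\varepsilon$ but the inequality $\Lambda_{f_t}(A)\leqslant\varepsilon\,\Lambda_{f_s}(A)$ nevertheless survives the limit $\delta\to0$ — is handled properly.
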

This leads to the equalities in the following.
\begin{defi}
The \emph{critical parameter} of $A$ with respect to the scale $\mathscr{F}$
is the number
\begin{eqnarray*}
\crit_\mathscr{F} A &:=& \sup\{s\in I | \Lambda_{f_s}(A)=+\infty\} \\
  &=& \sup\{s\in I \,|\, \Lambda_{f_s}(A)>0\} \\
  &=& \inf\{s\in I \,|\, \Lambda_{f_s}(A)=0\} \\
  &=& \inf\{s\in I \,|\, \Lambda_{f_s}(A)<+\infty\} 
\end{eqnarray*}
\end{defi}
Note that the critical parameter belongs to \emph{the closure} of $I$ in $\bar{\mathbb{R}}$.

\subsection{Basic properties of the critical parameter}

The critical parameter defined by any scale $\mathscr{F}$ shares many properties
with the Hausdorff dimension.

\begin{prop}
The following properties hold:
\begin{itemize}
\item (monotonicity) if $A\subset B\subset X$, then $\crit_\mathscr{F} A\leqslant \crit_\mathscr{F} B$,
\item (countable union) for any countable family of sets $A_i\subset X$,
      \[\crit_\mathscr{F}(\cup A_i)=\sup_i \crit_\mathscr{F} A_i,\]
\item (Lipschitz monotonicity) If there is a sub-Lipschitz map from $X$
      to another metric space $Y$, then 
      \[\crit_\mathscr{F} X \leqslant \crit_\mathscr{F} Y.\]
\item (Lipschitz invariance) if there is a bi-Lipschitz map from $X$
      onto another metric space $Y$, then
      \[\crit_\mathscr{F} X =\crit_\mathscr{F} Y.\]
\end{itemize}
\end{prop}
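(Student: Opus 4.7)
The plan is to handle the four properties in two groups: monotonicity and the countable union formula are formal consequences of the Carath\'eodory construction, while the two Lipschitz statements are where the separation property of a scale actually plays a role.

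For monotonicity, I would observe that any closed $\delta$-covering of $B$ is \emph{a fortiori} a closed $\delta$-covering of any $A\subset B$, so the infimum defining $\Lambda_{f_s}(A)$ is dominated by the one defining $\Lambda_{f_s}(B)$ for every $s$ and every $\delta$; the inequality of critical parameters then follows from any of the four equivalent characterizations. For the countable union formula, monotonicity immediately yields $\crit_\mathscr{F}(\cup_i A_i)\geqslant \sup_i \crit_\mathscr{F} A_i$. For the reverse inequality I would rely on the fact that each $\Lambda_{f_s}$, being produced by Carath\'eodory's construction, is a Borel outer measure and hence countably subadditive: for any $t>\sup_i \crit_\mathscr{F} A_i$, each term $\Lambda_{f_t}(A_i)$ vanishes, so $\Lambda_{f_t}(\cup_i A_i)=0$, giving $\crit_\mathscr{F}(\cup_i A_i)\leqslant t$, and letting $t$ decrease to the supremum concludes.

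The heart of the argument is Lipschitz monotonicity, where the sub-Lipschitz constant must be absorbed using the separation property. Let $\varphi:X\to Y$ satisfy $d_Y(\varphi(x),\varphi(x'))\geqslant d_X(x,x')/C$ for some $C\geqslant 1$; then any subset $E\subset Y$ has preimage $\varphi^{-1}(E)$ of diameter at most $C\diam E$, and the same bound holds for its closure in $X$. Fix $t>\crit_\mathscr{F} Y$ and pick any $s\in I$ with $s>t$. The separation property, applied to the pair $(s,t)$ and the constant $C$, provides $r_0>0$ such that $f_s(Cr)\leqslant f_t(r)$ for every $r\leqslant r_0$. For $\delta\leqslant r_0$ and any closed $\delta$-covering $(E_i)$ of $Y$, the sets $F_i:=\overline{\varphi^{-1}(E_i)}$ form a closed $C\delta$-covering of $X$ satisfying
\[\sum_i f_s(\diam F_i)\leqslant \sum_i f_s(C\diam E_i)\leqslant \sum_i f_t(\diam E_i).\]
Taking the infimum over coverings of $Y$ and then letting $\delta\to 0$ gives $\Lambda_{f_s}(X)\leqslant \Lambda_{f_t}(Y)=0$, hence $\crit_\mathscr{F} X\leqslant s$; letting $s$ and $t$ both decrease to $\crit_\mathscr{F} Y$ yields the desired inequality.

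Lipschitz invariance is then immediate: if $\varphi:X\to Y$ is a bi-Lipschitz bijection, both $\varphi$ and $\varphi^{-1}$ are sub-Lipschitz, so two applications of Lipschitz monotonicity give $\crit_\mathscr{F} X=\crit_\mathscr{F} Y$. The only delicate point throughout is the ordering of quantifiers in the sub-Lipschitz step: one must fix $s>t>\crit_\mathscr{F} Y$ before choosing $\delta$, which is precisely what motivates including the universal quantifier $\forall C\geqslant 1$ in the definition of a scale.
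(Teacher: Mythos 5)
Your argument is correct and follows essentially the same route as the paper: monotonicity and the countable-union formula from the (outer) measure properties of $\Lambda_{f_s}$, and Lipschitz monotonicity by pulling back closed coverings through the sub-Lipschitz map and absorbing the constant $C$ via the separation property (with the correct quantifier order: $s>t>\crit_\mathscr{F} Y$ fixed before $\delta$), with bi-Lipschitz invariance by symmetry.
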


\begin{proof}
The monotonicity and countable union properties are straigthforward 
since $\Lambda_{f_s}$ is a measure for all $s$.
The Lipschitz monotonicity and Lipschitz invariance are proved
just like the invariance of Hausdorff dimension, using
the separation property.

More precisely, let $g:X\to Y$ be a sub-Lipschitz map: for some
$D>0$ and all $x,x'\in X$,
\[d(g(x),g(x')) \geqslant D d(x,x')\]

Given any countable closed $D\delta$-covering $(F_i)$ of $Y$,
the sets $E_i=\overline{g^{-1}(F_i)}$
are closed, of diameter at most
$D^{-1}\diam F_i\leqslant \delta$ and cover $X$.
By the separation property, given any $s<t$ in the parameter set of $\mathscr{F}$,
there is a $\delta_0$ such that
for all $r\in(0,\delta_0)$ we have $f_t(D^{-1}r)\leqslant f_s(r)$.
If $\Lambda_{f_s}(Y)=0$, then we can find coverings $(F_i)$ of $Y$
of arbitrarily low diameter making $\sum f_s(\diam F_i)$
arbitrarily low.
It follows that the corresponding coverings $(E_i)$
of $X$ make $\sum f_t(\diam E_i)$ arbitrarily low, so
that $\Lambda_{f_t}(X)=0$. Letting $s$ and $t$ approach
the critical parameter of $Y$ shows that  
\[\crit_\mathscr{F} Y \geqslant \crit_\mathscr{F} X\]

If there is a bi-Lipschitz equivalence between $X$ and $Y$,
we get the other inequality by symmetry.
\end{proof}

\section{Estimations tools}

Let us give two tools to estimate the critical parameter of a given
set. Both are direct analogues of standard tools used for Hausdorff
dimension. We consider here a fixed Polish metric space $X$ and
a given scale $\mathscr{F}=(f_s)_{s\in I}$.

\subsection{Upper bounds via growth of coverings}

The most evident way to measure the size of a compact set $A$
is to consider the growth of the minimal number $N(A,\varepsilon)$ 
of radius $\varepsilon$ balls needed to cover $A$ when $\varepsilon \to 0$.
If $N(A,\varepsilon)$ is roughly $(1/\varepsilon)^d$, more precisely
if
\[\lim_{\varepsilon\to0} \frac{\log N(A,\varepsilon)}{\log (1/\varepsilon)} = d\]
then one says that $X$ has Minkowski dimension (or M-dimension for short, also
called box dimension) equal to $d$.
The limit need not exist, and one defines the upper and lower M-dimensions
by replacing it by an infimum or supremum limit. Equivalently, one can define
these dimensions by
\begin{eqnarray*}
\udim(A) &=& \inf \left\{s>0 \,\middle|\, 
  \limsup_{\varepsilon\to 0} N(A,\varepsilon)\varepsilon^s <+\infty\right\} \\
\ldim(A) &=& \inf \left\{s>0 \,\middle|\, 
  \liminf_{\varepsilon\to 0} N(A,\varepsilon)\varepsilon^s <+\infty\right\}
\end{eqnarray*}
which is much more easily generalized to arbitrary scales.

\begin{defi}
The \emph{lower} and \emph{upper Minkowski critical parameter}
of a compact set $A\subset X$ with respect to the scale $\mathscr{F}$
are defined as
\begin{eqnarray*}
\ucrit_\mathscr{F}(A) &:=& \inf \left\{s\in I \,\middle|\, 
  \limsup_{\varepsilon\to 0} N(A,\varepsilon) f_s(\varepsilon) <+\infty\right\}\\
\lcrit_\mathscr{F}(A) &:=& \inf \left\{s\in I \,\middle|\, 
  \liminf_{\varepsilon\to 0} N(A,\varepsilon) f_s(\varepsilon) <+\infty\right\}
\end{eqnarray*}
\end{defi}

It is clear from the definition that 
$\lcrit_\mathscr{F}(A)\leqslant \ucrit_\mathscr{F}(A)$,
and there are several other equivalent ways to define the Minkowski critical 
parameters, for example
\[\lcrit_\mathscr{F}(A) = \sup \left\{s\in I \,\middle|\, 
  \liminf_{\varepsilon\to 0} N(A,\varepsilon) f_s(\varepsilon) >0\right\}\]

The following result enables one to get upper bounds on the critical parameter.
\begin{prop}
The following inequality always holds: 
\[\crit_\mathscr{F}(A)\leqslant\lcrit_\mathscr{F}(A).\]
\end{prop}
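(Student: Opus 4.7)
The plan is to exploit economical $\varepsilon$-coverings coming from the Minkowski critical parameter to produce coverings with small $\Lambda_{f_t}$-weight, the key leverage being the separation property of the scale applied with the constant $C=2$ (needed because balls of radius $\varepsilon$ have diameter up to $2\varepsilon$).

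First I fix $s \in I$ with $s > \lcrit_\mathscr{F}(A)$ and a parameter $t \in I$ with $s < t$. By definition of $\lcrit_\mathscr{F}$, we have $\liminf_{\varepsilon\to 0} N(A,\varepsilon) f_s(\varepsilon) < +\infty$, so there exist $M > 0$ and a sequence $\varepsilon_n \to 0$ with
\[ N(A,\varepsilon_n)\, f_s(\varepsilon_n) \leqslant M \quad \text{for all } n. \]
Let $(B_i^n)_{i=1}^{N(A,\varepsilon_n)}$ be a covering of $A$ by closed balls of radius $\varepsilon_n$, so that $\diam B_i^n \leqslant 2\varepsilon_n$.

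Next I apply the separation property to the pair $s < t$ with constant $C = 2$: there exists $\delta_0 > 0$ such that $f_t(2r) \leqslant \varepsilon f_s(r)$ for all $r \in (0, \delta_0)$ and any prescribed $\varepsilon > 0$. For $n$ large enough we have $2\varepsilon_n < \delta$ (with $\delta$ the threshold in Carathéodory's construction), and also $\varepsilon_n < \delta_0$, so
\[ \sum_{i=1}^{N(A,\varepsilon_n)} f_t(\diam B_i^n) \leqslant N(A,\varepsilon_n)\, f_t(2\varepsilon_n) \leqslant M \cdot \frac{f_t(2\varepsilon_n)}{f_s(\varepsilon_n)} \longrightarrow 0. \]
Hence the infimum in Carathéodory's construction at scale $\delta$ is $0$, and passing to the limit $\delta \to 0$ gives $\Lambda_{f_t}(A) = 0$. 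Therefore $\crit_\mathscr{F}(A) \leqslant t$; letting $t \searrow s$ inside $I$ and then $s \searrow \lcrit_\mathscr{F}(A)$ yields the claim.

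The main (mild) obstacle is bookkeeping: one must make sure $t > s$ can be chosen inside the parameter interval $I$ (easy unless $s$ is the right endpoint, in which case the inequality is trivial), and that $s$ can be taken arbitrarily close to $\lcrit_\mathscr{F}(A)$ from above while staying in $I$. Everything else is a direct transcription of the standard Hausdorff-versus-Minkowski argument, with the factor $2$ between radius and diameter absorbed by the separation property — which is precisely why that property is stated with an arbitrary constant $C \geqslant 1$ rather than just $C = 1$.
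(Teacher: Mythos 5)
Your proof is correct and follows essentially the same route as the paper's: cover $A$ by $N(A,\varepsilon)$ balls along a subsequence realizing the $\liminf$, and use the separation property with $C=2$ to absorb the radius-to-diameter factor and force $\sum f_t(\diam B_i)\to 0$, whence $\Lambda_{f_t}(A)=0$ and $\crit_\mathscr{F}(A)\leqslant t$. The extra bookkeeping you note about endpoints of $I$ is handled implicitly in the paper but is indeed the only point requiring care.
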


\begin{proof}
For all positive $\varepsilon$, there is a covering $(B_i)$ of $A$
by $N(A,\varepsilon)$ balls of radius $\varepsilon$. Given any 
$t>s>\lcrit_\mathscr{F}(A)$ we have 
\[\sum f_t(\diam B_i)\leqslant N(A,\varepsilon) f_t(2\varepsilon)
  \leqslant N(A,\varepsilon) f_s(\varepsilon)\]
as soon as $\varepsilon$ is small enough. Passing to an infimum limit,
we get $\Lambda_{f_t}(A)=0$ and thus $\crit_\mathscr{F} A \leqslant t$.
\end{proof}

Unfortunately, there is no way to have a lower bound of the critical
parameter in terms of these Minkowski versions. The classical counter-example
is the set $\{0,1,1/2,1/3,\dots\}$ that has Minkowski dimension $1/2$
but is countable, thus has Hausdorff dimension $0$. This is one of
the reasons to introduce Hausdorff dimension and 
more general critical parameter: Minkowski critical parameters can grow significantly
under countable union. 

They however share the other properties of critical parameters.
\begin{prop}
The upper and lower Minkowski critical parameter
satisfy the monotonicity and Lipschitz invariance properties:
\begin{itemize}
\item if $A\subset B\subset X$, then 
  \[\lcrit_\mathscr{F}(A)\leqslant \lcrit_\mathscr{F}(B) \quad\mbox{and}\quad
    \ucrit_\mathscr{F}(A)\leqslant \ucrit_\mathscr{F}(B),\]
\item if there is a bi-Lipschitz equivalence $A\to B$, then
      \[\lcrit_\mathscr{F}(A)=\lcrit_\mathscr{F}(B)\quad\mbox{and}
      \quad \ucrit_\mathscr{F}(A)=\ucrit_\mathscr{F}(B).\]
\end{itemize}
\end{prop}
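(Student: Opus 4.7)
The proposal is to mimic the classical proofs for Minkowski dimension, with the separation property of scales playing the role that homogeneity $(Cr)^s = C^s r^s$ plays in the polynomial case. All the work is done at the level of the covering numbers $N(\cdot, \varepsilon)$.

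For monotonicity, I would simply observe that any finite closed $\varepsilon$-cover of $B$ restricts to an $\varepsilon$-cover of $A$ (intersecting each ball with $A$ does not increase diameters), so $N(A,\varepsilon)\leqslant N(B,\varepsilon)$ for every $\varepsilon>0$. The monotonicity of $\lcrit_\mathscr{F}$ and $\ucrit_\mathscr{F}$ then follows at once from the definitions, since multiplying by $f_s(\varepsilon)$ and passing to $\liminf$ or $\limsup$ preserves the inequality.

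For bi-Lipschitz invariance, let $g\colon A\to B$ satisfy $D_1 d(x,x')\leqslant d(g(x),g(x'))\leqslant D_2 d(x,x')$. The idea is to push forward covers: given a closed $\varepsilon$-cover $(F_i)$ of $B$, the sets $g^{-1}(F_i)$ form a closed cover of $A$ whose diameters are at most $\varepsilon/D_1$. This yields the key covering-number estimate
\[
N(A,\varepsilon/D_1)\leqslant N(B,\varepsilon), \qquad \text{equivalently}\qquad N(A,\varepsilon)\leqslant N(B, D_1\varepsilon).
\]
To convert this into an inequality between critical parameters, I would fix any $s>\lcrit_\mathscr{F}(B)$ and any $t>s$ in the parameter set of $\mathscr{F}$. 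Setting $C=\max(1,1/D_1)$, the separation property gives $f_t(\varepsilon/D_1)=f_t(C\cdot(C D_1)^{-1}\varepsilon)\leqslant f_t(C\varepsilon)=o(f_s(\varepsilon))$ as $\varepsilon\to 0$ (one may freely reduce to $C\geqslant 1$ since $f_t$ is non-decreasing). Therefore, for $\varepsilon$ small enough,
\[
N(A,\varepsilon/D_1)\,f_t(\varepsilon/D_1)\leqslant N(B,\varepsilon)\,f_s(\varepsilon),
\]
and changing variable $\varepsilon'=\varepsilon/D_1\to 0$ lets me take $\liminf$ on both sides to conclude $\lcrit_\mathscr{F}(A)\leqslant t$. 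Letting $t\downarrow s$ and then $s\downarrow \lcrit_\mathscr{F}(B)$ yields $\lcrit_\mathscr{F}(A)\leqslant \lcrit_\mathscr{F}(B)$. The argument for $\ucrit_\mathscr{F}$ is identical with $\limsup$ replacing $\liminf$, and the reverse inequality follows by applying the same reasoning to $g^{-1}$.

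There is no real obstacle; the only subtle point is that the naive estimate produces the argument $\varepsilon/D_1$ inside $f_t$, which is not immediately controlled by $f_s(\varepsilon)$ when $D_1<1$. This is precisely why the separation property is required to hold for every $C\geqslant 1$, not merely $C=1$ (which sufficed for the critical parameter proof), and invoking it with $C=1/D_1$ absorbs the bi-Lipschitz distortion.
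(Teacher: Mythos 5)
Your proof is correct, and since the paper deliberately omits this "easy proof," remarking only that the bi-Lipschitz invariance requires the full separation property (arbitrary $C\geqslant 1$, not just $C=1$), your argument is precisely the one the author has in mind: push covers through $g^{-1}$ to get $N(A,\varepsilon/D_1)\leqslant N(B,\varepsilon)$ and absorb the distortion $1/D_1$ via the separation property with $C=\max(1,1/D_1)$. The only cosmetic looseness — whether $N$ counts balls of radius $\varepsilon$ or sets of diameter $\varepsilon$, which costs a further bounded factor inside $f_t$ — is likewise harmless for exactly the same reason, as the paper itself notes when comparing coverings and packings.
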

We do not give the easy proof of this result, but note that for the bi-Lipschitz
invariance, again one needs the full power
of the separation property for scales.

In order to compute $\lcrit$ and $\ucrit$, one can also use packings:
denoting by $P(A,\varepsilon)$ the maximal number of points in $A$ that are
pairwise at distance at least $\varepsilon$, we indeed have 
$N(A,2\varepsilon)\leqslant P(A,\varepsilon)$ and 
$P(A,2\varepsilon)\leqslant N(A,\varepsilon)$. Here, once again, the 
strong separation property is vital to ensure that the factor $2$ is harmless.

\subsection{Lower bounds via Frostman's lemma}

Finding a large packing of balls in $A$
is not sufficient to bound the critical parameter from below but,
as for the Hausdorff dimension, a close analogue that is sufficient
is to exhibit a measure with small growth.

\begin{prop}[Frostman's Lemma]
For all Borel subset $A$ of $X$, if there is a Borel probability measure $\mu$ 
concentrated on $A$ and a positive constant $C$ such that for all 
$x\in A$ and all $r>0$
\[\mu(B(x,r))\leqslant C f_s(r)\]
then $\Lambda_{f_s}(A)>0$ (and in particular
$\crit_\mathscr{F}(A)\geqslant s$).
Moreover the converse holds.
\end{prop}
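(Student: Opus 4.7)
The plan is to treat the two implications separately, with the direct one being an immediate mass-distribution argument and the converse requiring a construction.

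\textbf{Direct implication.} Assume $\mu$ satisfies the Frostman bound. Given any closed $\delta$-covering $(E_i)$ of $A$, discard those $E_i$ disjoint from $A$ and pick $x_i \in A \cap E_i$ in each of the remaining ones; then $E_i \subset \overline{B}(x_i, \diam E_i)$, and continuity of $f_s$ lets one extend the hypothesis from open to closed balls. Summing,
\[ 1 = \mu(A) \leqslant \sum_i \mu(E_i) \leqslant C \sum_i f_s(\diam E_i), \]
so $\sum_i f_s(\diam E_i) \geqslant 1/C$ uniformly over all closed $\delta$-coverings. Passing to the infimum and letting $\delta \to 0$ gives $\Lambda_{f_s}(A) \geqslant 1/C > 0$.

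\textbf{Converse.} Assume $\Lambda_{f_s}(A) > 0$. By inner regularity of Borel measures on the Polish space $X$, reduce to a compact $K \subset A$ with $0 < \Lambda_{f_s}(K) < \infty$; it suffices to produce a Frostman measure on $K$. I would imitate the classical mass-distribution proof of Frostman's lemma: pick a small $\lambda \in (0,1)$ and, at each scale $\lambda^n$, take a maximal $\lambda^n$-separated subset of $K$ together with its nearest-point partition into pieces of diameter at most $2\lambda^n$, yielding a nested tree of cells. Initialize masses at a fine scale by assigning $f_s$ of the diameter to each cell, then propagate upwards by summing children and capping each ancestor by $f_s$ of its own diameter. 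The resulting sub-probability measures are supported in $K$, satisfy the Frostman inequality on balls whose radius matches a scale $\lambda^n$, and have total mass uniformly bounded below by a positive multiple of $\Lambda_{f_s}(K)$ (this is where the positivity of the outer measure is used). A weak-$*$ subsequential limit followed by renormalization yields the desired probability measure on $K$.

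\textbf{Main obstacle.} The direct implication is routine. The substance lies in the converse, more precisely in transferring the Frostman inequality from balls of radius $\lambda^n$ to arbitrary radii $r > 0$. For the polynomial scale this is immediate from the doubling bound $f_s(2r) = 2^s f_s(r)$, but this bound fails for both the intermediate and the power-exponential scales. The way out is the separation property: it gives $f_s(Cr) \leqslant f_{s'}(r)$ for every $s' < s$ and every sufficiently small $r$, so that the geometric constants $2$ and $1/\lambda$ appearing in the construction can be absorbed at the cost of an arbitrarily small perturbation of the scale parameter. The net effect is that the Frostman measure one actually constructs satisfies the bound at parameter $s'$ arbitrarily close to $s$; this is exactly what is needed for the critical-parameter conclusion $\crit_\mathscr{F}(A) \geqslant s$, which is the form in which Frostman's lemma is used later.
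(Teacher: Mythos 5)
Your direct implication is correct and complete: it is the standard mass-distribution argument, and it is the only half of the statement the paper actually relies on for lower bounds on critical parameters. Note that the paper itself offers no proof of this proposition at all --- it cites Mattila, remarking that the direct part is straightforward and the converse is the difficult part --- so there is nothing to compare your converse against except the literature.

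On the converse, your sketch has two genuine gaps beyond the one you identified. First, the opening reduction ``by inner regularity\dots to a compact $K\subset A$ with $0<\Lambda_{f_s}(K)<\infty$'' is not a routine regularity fact: $\Lambda_{f_s}$ is Borel regular but in general not $\sigma$-finite, hence not a Radon measure, and the existence of a compact subset of positive finite measure inside a Borel set of positive measure is itself a substantial theorem (Besicovitch and Davies in $\mathbb{R}^n$, Howroyd for general complete separable metric spaces). Second, and more decisively, your nearest-point-partition construction transfers the mass bound from cells to balls only if a ball of radius comparable to $\lambda^n$ meets a \emph{uniformly bounded} number of generation-$n$ cells; this holds in $\mathbb{R}^d$ by a volume-packing argument but fails in a general Polish space $X$, where a maximal $\lambda^n$-separated set may have arbitrarily many points inside a ball of radius $2\lambda^n$. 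The known proofs in general metric spaces therefore take a different route, via weighted Hausdorff measures and a Hahn--Banach or minimax duality argument rather than a dyadic-cell construction. Your use of the separation property to absorb geometric constants is the right idea for the non-doubling scales, but two caveats: it produces a Frostman measure at a parameter $s'<s$ only, which is strictly weaker than the stated converse; and the conclusion $\crit_{\mathscr{F}}(A)\geqslant s$ comes from the \emph{direct} implication, not the converse --- the converse is invoked later in the paper only to manufacture measures from positivity of Hausdorff measure (the polynomial scale), where the small loss in the exponent is indeed harmless.
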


The proof can be found for example in \cite{Mattila}.
The difficult part is the converse, while
the very useful direct part is straightforward.

\section{Critical parameters of Hilbert cubes}

Let us now use the previous tools to compute critical parameters
for the Hilbert cubes defined in the introduction. Here $X$
is assumed to be compact.

The topology of a Hilbert cube $\hcube(X;\bar a)$
is the product topology, in particular it is compact.
It need not be infinite dimensional
in general; for example if $X$ is finite and $\bar a$ is geometric,
then $\hcube(X,\bar a)$ is a finite-dimensional, self-similar Cantor set.

We shall estimate critical parameters for two different kind
of coefficients $\bar a$; in both cases the upper bound is obtained
with the same method, so let us give a technical
lemma to avoid repetition.
\begin{lemm}\label{lemm:techabove}
Let $(X,d)$ be a compact metric space of finite, positive
upper Minkowski dimension $s$ and
let $\bar a=(a_n)_{n\geqslant 1}$
be an $\ell^2$ sequence of positive numbers. If $L:(0,1)\to\mathbb{N}^*$
is a non-increasing function such that
\[\sum_{n > L(\varepsilon)} a_n^2\leqslant \frac{\varepsilon^2}{(\diam X)^2 }\]
then for all $\eta>0$ and all $\varepsilon$ small enough compared
to $\eta$, we have
\begin{equation}
\log N(\hcube(X,\bar a),\varepsilon)\leqslant
  (s+\eta)\left(\log\prod_{n=1}^{L(\varepsilon/2)} a_n+\frac12
  \log L(\frac\varepsilon2)!+L(\frac\varepsilon2)\log\frac1\varepsilon
  \right).
\label{eq:techabove}
\end{equation}
\end{lemm}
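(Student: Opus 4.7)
The plan is to split each sequence in $\hcube(X, \bar a)$ into a head $(x_n)_{n \leqslant L}$ with $L = L(\varepsilon/2)$ and a tail $(x_n)_{n > L}$, cover the head by a product cover of $X^L$ with carefully chosen per-coordinate radii, and bound each factor using the upper Minkowski hypothesis on $X$.

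The tail is controlled by the hypothesis:
\[\sum_{n>L} a_n^2 d(x_n, y_n)^2 \leqslant (\diam X)^2 \sum_{n>L} a_n^2 \leqslant (\varepsilon/2)^2,\]
so two sequences agreeing on the first $L$ coordinates are at $d_{\bar a}$-distance at most $\varepsilon/2$. For the head, I take a product cover with radii $r_n$ ($n \leqslant L$) satisfying $\sum_{n=1}^L a_n^2 r_n^2 \leqslant \varepsilon^2/16$; the balanced choice $r_n = \varepsilon/(4 a_n \sqrt{n H_L})$ with $H_L = \sum_{n \leqslant L} 1/n$ realises this with equality. Combined with the tail bound, two points in a common cell of the product cover lie within $\varepsilon/\sqrt 2 < \varepsilon$, whence $N(\hcube(X,\bar a),\varepsilon) \leqslant \prod_{n=1}^L N(X, r_n)$. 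Crucially, this choice of $r_n$ produces exactly the bracket in \eqref{eq:techabove}: using only $\sum_{n=1}^L \log \sqrt{n} = \tfrac{1}{2} \log L!$,
\[\sum_{n=1}^L \log (1/r_n) = \log \prod_{n=1}^L a_n + \tfrac{1}{2} \log L! + L \log(1/\varepsilon) + \tfrac{L}{2}\log H_L + L\log 4.\]

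Applying the upper Minkowski bound $N(X, r) \leqslant r^{-(s + \eta/2)}$ (valid for $r$ small, the regime $r_n \geqslant \diam X$ contributing a harmless $N(X, r_n) = 1$) factor by factor and summing logarithms yields
\[\log N(\hcube(X,\bar a), \varepsilon) \leqslant (s+\eta/2)\Bigl[\log \prod_{n=1}^L a_n + \tfrac{1}{2}\log L! + L\log(1/\varepsilon)\Bigr] + O(L \log\log L).\]
The main obstacle is then to absorb the remaining $O(L\log\log L)$ and $O(L)$ corrections by upgrading $(s+\eta/2)$ to $(s+\eta)$. Since $\log H_L = O(\log\log L)$ and, in the regimes of interest (polynomial or exponential decay of $\bar a$, which are the only ones used downstream), $L\log(1/\varepsilon)$ dominates these corrections for $\varepsilon$ small compared to $\eta$, the principal term of the bracket swallows them and yields the announced inequality.
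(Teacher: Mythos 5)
Your proof is correct and follows essentially the same route as the paper: a head/tail split at $L(\varepsilon/2)$, a product cover of the head with per-coordinate radii proportional to $\varepsilon/(a_n w_n)$ for a slowly growing weight with $\ell^2$-summable inverse (you take $w_n=4\sqrt{nH_L}$ where the paper takes $2C\sqrt{n}\log n$), and the upper Minkowski bound applied factor by factor. The only difference is your final absorption step, where the appeal to the downstream decay regimes is unnecessary: your $O(L\log\log L)$ correction is already dominated by the gain $\tfrac{\eta}{4}\log L!\sim\tfrac{\eta}{4}L\log L$ coming from the positive $\tfrac12\log L!$ term of the bracket (together with $\tfrac{\eta}{2}L\log\tfrac1\varepsilon$ for the $O(L)$ part), which is exactly how the paper absorbs its analogous $\sum_{n\leqslant L}\log\log n$ and $O(L)$ terms.
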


\begin{proof}
Let $s'=s+\eta$. By definition of upper M-dimension, there is a constant
$C$ such that for all $\varepsilon<1$,
$N(X,\varepsilon)\varepsilon^{s'}\leqslant C$.
We shall construct a covering of the Hilbert cube
from coverings at different scales of $X$.
 Denoting
by $X_{a_n}$ the space $X$ endowed with the metric $a_n d$, we
have $N(X_{a_n},\varepsilon)=N(X,\varepsilon/a_n)$, thus for all
$\varepsilon$ smaller than $\max a_n$ and each $n$, we can find
a family of $C(2Ca_n  \sqrt{n}\log n/\varepsilon)^{s'}$
points $(x_n^i)_i$ such that
every $x\in X_{a_n}$ is at distance at most $\varepsilon/(2C\sqrt{n}\log n)$
from one of them. The use of the sequence $(\sqrt{n}\log n)$ will
become clear in a moment; what is important is that it increases not
too fast, but its inverse is $\ell^2$.

Now any point $(x_1,x_2,\dots)$ in $\hcube(X,\bar a)$
is at distance at most $\varepsilon/2$ from
$(x_1,\dots,x_{L(\varepsilon/2)},0,0,\dots)$, which is itself at
distance at most 
\[\frac{\varepsilon}{2C}\left(\sum \frac{1}{n \log^2 n}\right)^{1/2}\leqslant
  \frac\varepsilon2\]
(up to enlarging $C$ if needed) from one of the points
$(x_1^{i_1},\dots,x_{L(\varepsilon/2)}^{i_L(\varepsilon/2)},0,\dots)$.
We get
\[N(\hcube(X,\bar a),\varepsilon)\leqslant \prod_{n=1}^{L(\varepsilon/2)}
  C\left(\frac{2C a_n \sqrt{n}\log n}{\varepsilon}\right)^{s'}\]
and we only have left to take the logarithm; two terms can be removed
up to doubling $\eta$: one
proportional to $L(\varepsilon/2)$, absorbed by the 
$L(\varepsilon/2)\log 1/\varepsilon$ term, and one
proportional to $\sum_1^{L(\varepsilon/2)} \log\log n$, absorbed
by the $\log (L(\varepsilon/2)!)$ term. Note that this last comparison
is of course very inefficient, but it avoids adding a 
$L(\varepsilon/2)\log\log L(\varepsilon/2)$ term
to the formula and the $\log (L(\varepsilon/2)!)$ term must be present
anyway due to the presence of $\sqrt n$ in the product above.
\end{proof}

When $\bar a$ decays exponentially,
the Hilbert cube has intermediate size and its fine critical parameter can
be determined.
\begin{prop}\label{prop:expdecay}
Let $X$ be any compact metric space and let $\lambda\in(0,1)$.
We have
\[\frac{\dim X}{2\log\frac1\lambda}\leqslant 
  \crit_{\mathscr{I}_2}\hcube(X,(\lambda^n))\leqslant 
  \ucrit_{\mathscr{I}_2}\hcube(X,(\lambda^n)) \leqslant
  \frac{\udim X}{2\log\frac1\lambda}\]
In particular, if $X$ has positive and finite Hausdorff and upper
Minkowski dimension, then
\[\crit_{\mathscr{I}}\hcube(X,(\lambda^n))=2\]
\end{prop}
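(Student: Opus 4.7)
The plan is to prove the sandwich in two independent steps (upper bound via Lemma~\ref{lemm:techabove}, lower bound via Frostman's lemma applied to a product measure on $X^{\mathbb{N}}$), and then to deduce the second assertion by scale comparison.

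For the upper bound I apply Lemma~\ref{lemm:techabove} with $\bar a=(\lambda^n)$ and the natural truncation $L(\varepsilon)\asymp\log(1/\varepsilon)/\log(1/\lambda)$ (forced by the geometric tail $\sum_{n>L}\lambda^{2n}\asymp\lambda^{2L}$). The three terms of \eqref{eq:techabove} then behave as $\log\prod_{n=1}^L\lambda^n=-L(L+1)\log(1/\lambda)/2\sim -\log^2(1/\varepsilon)/(2\log(1/\lambda))$, $L\log(1/\varepsilon)\sim\log^2(1/\varepsilon)/\log(1/\lambda)$, and $\tfrac12\log L!=O(\log(1/\varepsilon)\log\log(1/\varepsilon))$; their sum is asymptotic to $\log^2(1/\varepsilon)/(2\log(1/\lambda))$. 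Comparison with the scale function $f_s(r)=\exp(-s\log^2(1/r))$ then yields $\ucrit_{\mathscr{I}_2}\hcube(X,\bar a)\leqslant (\udim X+\eta)/(2\log(1/\lambda))$ for every $\eta>0$, closing the upper bound.

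For the lower bound, fix $t<\dim X$ and use the converse part of Frostman's lemma applied to $X$ with the polynomial scale to pick a Borel probability measure $\nu$ on $X$ with $\nu(B(x,r))\leqslant Cr^t$. The product measure $\mu=\bigotimes_{n\geqslant 1}\nu$ is a probability on $\hcube(X,\bar a)=X^{\mathbb N}$. Since $d_{\bar a}(\bar x,\bar y)\geqslant \lambda^n d(x_n,y_n)$ for all $n$, one has $B(\bar x,r)\subset \prod_n B(x_n,r\lambda^{-n})$ and thus $\mu(B(\bar x,r))\leqslant \prod_n \nu(B(x_n,r\lambda^{-n}))$. Applying the Frostman bound for $n\leqslant N(r):=\lfloor\log(1/r)/\log(1/\lambda)\rfloor$ (and the trivial bound $\nu\leqslant 1$ beyond that threshold, where $r\lambda^{-n}>1$), the two dominant terms $-tN(r)\log(1/r)$ and $tN(r)^2\log(1/\lambda)/2$ partially cancel to produce
\[\log \mu(B(\bar x,r)) \leqslant -\tfrac{t}{2\log(1/\lambda)}\log^2(1/r) + o(\log^2(1/r)).\]
Hence $\mu$ satisfies the Frostman condition in the scale $\mathscr{I}_2$ for any $s<t/(2\log(1/\lambda))$, giving $\crit_{\mathscr{I}_2}\hcube(X,\bar a)\geqslant t/(2\log(1/\lambda))$. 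Letting $t\nearrow\dim X$ closes the sandwich.

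For the final assertion, positivity and finiteness of both dimensions put $\crit_{\mathscr{I}_2}\hcube(X,\bar a)$ in $(0,\infty)$. For $\tau<2$ and any $s>0$, $(\log 1/r)^\tau\leqslant s\log^2(1/r)$ for $r$ small enough, so the coarse-scale function $f^{\mathscr{I}}_\tau$ eventually dominates $f^{\mathscr{I}_2}_s$, whence $\Lambda_{f^{\mathscr{I}}_\tau}\geqslant \Lambda_{f^{\mathscr{I}_2}_s}$; choosing $s$ positive and strictly less than $\crit_{\mathscr{I}_2}\hcube(X,\bar a)$ makes the latter infinite, so $\crit_\mathscr{I}\hcube(X,\bar a)\geqslant\tau$. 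For $\tau>2$ the reverse comparison together with $s>\crit_{\mathscr{I}_2}\hcube(X,\bar a)$ gives $\crit_\mathscr{I}\hcube(X,\bar a)\leqslant\tau$, so $\crit_\mathscr{I}\hcube(X,\bar a)=2$. The main obstacle is the same on both sides of the sandwich: the asymptotic constant $1/(2\log(1/\lambda))$ appears only through a precise partial cancellation between a quadratic-in-$L$ (or $N$) term and a cross term linear in $L\log(1/\varepsilon)$ (or $N\log(1/r)$), and one must keep careful track that every other contribution (Stirling corrections, integer-part boundary effects in $L(\varepsilon)$ and $N(r)$, multiplicative constants from Frostman) is genuinely $o(\log^2(1/r))$ and drops out in the final comparison with the scale $\mathscr{I}_2$.
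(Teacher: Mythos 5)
Your proof is correct and follows essentially the same route as the paper: Lemma~\ref{lemm:techabove} with the geometric truncation $L(\varepsilon)\sim\log(1/\varepsilon)/\log(1/\lambda)$ for the upper bound, and Frostman's lemma applied to the product measure $\otimes\nu$ with the same cancellation between the $-tN\log(1/r)$ and $tN^2\log(1/\lambda)/2$ terms for the lower bound. Your explicit scale-comparison argument deducing $\crit_{\mathscr{I}}=2$ from $\crit_{\mathscr{I}_2}\in(0,\infty)$ is a correct elaboration of a step the paper leaves implicit.
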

In particular, when $0<\udim X=\dim X<+\infty$
the $2$-fine intermediate
critical parameter of the Hilbert cube is equal to $\dim X/(2\log 1/\lambda)$.

\begin{proof}
We denote by $H$ the generalized Hilbert cube under study.
Note that both inequalities are trivial when $\dim X=0$ and, respectively,
$\udim X=+\infty$. We therefore assume otherwise.

Using the notation Lemma \ref{lemm:techabove}, one can choose $L$ such that
\[L(\varepsilon/2)\sim \frac{\log\frac1\varepsilon}{\log\frac1\lambda}\]
where $f\sim g$ means asymptotic equivalence: $f=g + o(g)$. 
Then in \eqref{eq:techabove} the second term is negligible
(of the order of $\log 1/\varepsilon \log\log 1/\varepsilon$)
compared to the first and third ones, and for all $s>\udim X$ we get
when $\varepsilon$ is small enough (up to invoking the lemma for a slightly 
smaller $s$):
\[\log N(H,\varepsilon)\leqslant s\left(
  -\frac{\left(\log\frac1\varepsilon\right)^2}{2\log\frac1\lambda} +
  \frac{\left(\log\frac1\varepsilon\right)^2}{\log\frac1\lambda}\right)
\]
so that $\ucrit_\mathscr{I} H\leqslant 2$ and 
$\ucrit_{\mathscr{I}_2} H\leqslant \udim X/(2\log 1/\lambda)$.

For all $0<t<\dim X$, there is a Borel probability measure $\nu$ on $X$
such that $\nu(B(x,r))\leqslant C r^{t}$ for all $r$. Such a measure
exists by Frostman's lemma since the $t$-dimensional Hausdorff measure
of $X$ is infinite, hence positive. Now $\mu:=\otimes_{n=1}^{+\infty} \nu$
is a Borel probability measure on $\hcube(X,(\lambda^n))\simeq X^{\mathbb{N}}$,
and for all $r>0$, all functions $M:\mathbb{R}^+\to\mathbb{N}$ and all $\bar x$ we have
\[B(\bar x,r)\subset \prod_{n=1}^{M(r)} B_{\lambda^n}(x_n,r)\times
   X\times X\times\dots\]
where $B_{\lambda^n}(x_n,r)$ is the ball in the scaled space
$X_{\lambda^n}$, and is therefore equal as a set to $B(x_n,r\lambda^{-n})$.
This ball has $\nu$-mesure at most $C(r\lambda^{-n})^{t}$ so that we get
\[\log\mu(B(\bar x,r))\leqslant t\left(-M(r)\log\frac1r
  +\frac{M(r)^2}2\log \frac1\lambda\right)+O(M(r))\]
The optimal choice is then to take 
\[M(r)\sim\frac{\log\frac1r}{\log\frac1\lambda}\]
so that 
\[\log\mu(B(\bar x,r))\leqslant -\frac{t}{2\log\frac1\lambda}\left(\log\frac1r\right)^2
+O\left(\log\frac1r\right)\]
Using Frostman's lemma and letting $t$ go to $\dim X$ we get
\[\crit_\mathscr{I_2} H\geqslant \dim X/(2\log 1/\lambda)\]
and in particular $\crit_\mathscr{I} H\geqslant 2$. 
\end{proof}

When $\bar a$ decays polynomially,
the corresponding Hilbert cube over any space of positive and finite
dimension has power-exponential size, mostly independant of the geometry of
$X$. Note that we shall need more precision than before when using Frostman's
lemma.
\begin{prop}\label{prop:polydecay}
Let $X$ be any compact metric space and let $\alpha>1/2$.
If $X$ has positive Hausdorff dimension, then
\[\frac{2}{2\alpha-1}\leqslant \crit_{\mathscr{P}}\hcube(X,(n^{-\alpha}))\]
and if $X$ has finite upper Minkowski dimension then
\[\ucrit_{\mathscr{P}}\hcube(X,(n^{-\alpha})) \leqslant\frac{2}{2\alpha-1}.\]
\end{prop}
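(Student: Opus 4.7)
Denote $H = \hcube(X,(n^{-\alpha}))$. For the upper bound, I would apply Lemma \ref{lemm:techabove} with $L(\varepsilon) \sim \varepsilon^{-2/(2\alpha-1)}$, which is the right order to make the tail $\sum_{n>L} n^{-2\alpha} \sim L^{1-2\alpha}/(2\alpha-1)$ bounded by $\varepsilon^2/(\diam X)^2$. With $a_n = n^{-\alpha}$, Stirling gives $\log\prod_{n=1}^L a_n = -\alpha \log L! \sim -\alpha L\log L$ and $\tfrac{1}{2}\log L! \sim \tfrac{1}{2}L\log L$, while the relation $\log(1/\varepsilon) = \tfrac{2\alpha-1}{2}\log L + O(1)$ turns $L\log(1/\varepsilon)$ into $\tfrac{2\alpha-1}{2}L\log L$. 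A small miracle then occurs: the three leading coefficients $-\alpha$, $\tfrac{1}{2}$, $\tfrac{2\alpha-1}{2}$ sum to zero, so the $L\log L$ terms cancel exactly and the bracket reduces to $O(L)$. Consequently $\log N(H,\varepsilon) = O(\varepsilon^{-2/(2\alpha-1)})$, and for any $s > 2/(2\alpha-1)$ the product $N(H,\varepsilon) f_s(\varepsilon) = \exp\bigl(O(\varepsilon^{-2/(2\alpha-1)}) - \varepsilon^{-s}\bigr)$ tends to zero, yielding $\ucrit_\mathscr{P} H \leq 2/(2\alpha-1)$.

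For the lower bound, I would use Frostman's lemma with a product measure. Fix $t < \dim X$ and a Borel probability measure $\nu$ on $X$ satisfying $\nu(B(x,\rho)) \leq C\rho^t$, and set $\mu = \bigotimes_{n\geq 1} \nu$ on $X^{\mathbb{N}} = H$. The naive containment $B(\bar x,r) \subset \prod_n B(x_n, rn^\alpha)$ is too wasteful and would only yield the suboptimal exponent $1/\alpha$. To exploit the $\ell^2$ constraint, I truncate to the first $M$ coordinates (to be optimized) and decompose dyadically: to each $\bar y \in B(\bar x,r)$ I assign $k_n = \max\bigl(0,\lceil\log_2(\sqrt{M}\,d(x_n,y_n)/(rn^\alpha))\rceil\bigr)$ for $n \leq M$. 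Then $d(x_n,y_n) \leq 2^{k_n} rn^\alpha/\sqrt{M}$, the $\ell^2$ constraint forces $\sum_{n=1}^M 4^{k_n} = O(M)$, and AM-GM bounds both $\sum k_n$ and the number of admissible dyadic types. Applying Frostman coordinate-wise, summing over types and using Stirling yields
\[\log \mu(B(\bar x,r)) \leq tM \log r + \bigl(\alpha - \tfrac{1}{2}\bigr)tM \log M + O(M\log\log M).\]
The balanced choice $M \sim r^{-2/(2\alpha-1)}$ makes the first two terms comparable and gives $\mu(B(\bar x,r)) \leq C'\exp\bigl(-c\,r^{-2/(2\alpha-1)}\bigr)$. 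For any $s < 2/(2\alpha-1)$ one has $r^{-s} = o(r^{-2/(2\alpha-1)})$, so the Frostman hypothesis of the lemma holds for the scale $\mathscr{P}$ at parameter $s$; letting $s \to 2/(2\alpha-1)$ and $t \to \dim X$ completes the argument.

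The main obstacle is the lower bound. One must resist the temptation to apply the naive product bound to $\mu(B(\bar x, r))$ and instead exploit the $\ell^2$ geometry by observing that the typical per-coordinate scale inside $B(\bar x, r)$ is $r/\sqrt{M}$, not $r$. The dyadic decomposition is the cleanest way to encode this phenomenon and to sum over the resulting combinatorial configurations without losing the optimal exponent $2/(2\alpha-1)$.
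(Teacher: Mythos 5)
Your upper bound is correct and follows the paper's own route (Lemma \ref{lemm:techabove} with $L(\varepsilon)\sim\varepsilon^{-2/(2\alpha-1)}$); your observation that the $L\log L$ coefficients $-\alpha+\tfrac12+\tfrac{2\alpha-1}{2}$ cancel is right and even slightly sharper than what is needed, since a residual factor $\log\frac1\varepsilon$ would still be killed by $e^{-(1/\varepsilon)^t}$ for $t>2/(2\alpha-1)$. Your lower bound takes a genuinely different route from the paper: the paper establishes $\mu(B(\bar x,r))\leqslant D^M r^M/(\sqrt{M!}\prod_1^M a_n)$ by an induction over coordinates, slicing the ball into spheres, coupling the radial pushforward of $\nu$ with Lebesgue measure by increasing rearrangement, and invoking Wallis integrals; your dyadic-type decomposition extracts the same $\sqrt{M!}$-type gain from the $\ell^2$ constraint by a purely combinatorial count, and it has the added merit of working for any Frostman exponent $t<\dim X$, so you do not need the paper's preliminary reduction to $\dim X>1$.

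However, the last step of your lower bound has a genuine gap. In your displayed inequality the two main terms have \emph{opposite signs}: $tM\log r<0$ while $(\alpha-\tfrac12)tM\log M>0$. With the ``balanced'' choice $M\sim r^{-2/(2\alpha-1)}$ one has $\log M=\tfrac{2}{2\alpha-1}\log\tfrac1r+O(1)$, hence $(\alpha-\tfrac12)tM\log M=tM\log\tfrac1r+O(M)$, and the two main terms cancel \emph{exactly}, leaving only $O(M\log\log M)$ — a bound that is not negative and tells you nothing about $\mu(B(\bar x,r))$. So the claimed estimate $\mu(B(\bar x,r))\leqslant C'\exp(-c\,r^{-2/(2\alpha-1)})$ does not follow from your inequality, and the final Frostman step collapses. (One could in principle rescue the balanced choice by tracking the sign of the $O(M)$ term coming from Stirling's $-\alpha tM$, but your error term $O(M\log\log M)$, which comes from the crude count of dyadic types, swamps it.) The fix is the one the paper uses: take $M\sim r^{-\beta}$ for a \emph{fixed} $\beta<2/(2\alpha-1)$, so that the total becomes $t\bigl(\beta(\alpha-\tfrac12)-1\bigr)M\log\tfrac1r+O(M\log\log M)$ with a strictly negative leading coefficient; this gives $\log\mu(B(\bar x,r))\leqslant -E(1/r)^{\beta}\log\tfrac1r\leqslant -(1/r)^{\beta}$ for small $r$, hence $\crit_{\mathscr{P}}H\geqslant\beta$ by Frostman, and one concludes by letting $\beta\uparrow 2/(2\alpha-1)$.
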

In particular, when $X$ has positive and finite Hausdorff and upper
Min\-kow\-ski dimensions, the power-exponential critical parameter
of $\hcube(X,(n^{-\alpha}))$ is equal to $2/(2\alpha-1)$.

\begin{proof}
Using the notation of Lemma \ref{lemm:techabove}, $L$ can be chosen
such that there are constants $C<D$ satisfying
\[C\left(\frac1\varepsilon\right)^{\frac{2}{2\alpha-1}}\leqslant
L(\varepsilon)\leqslant D\left(\frac1\varepsilon\right)^{\frac{2}{2\alpha-1}}.\]
For all $s$ greater than the upper M-dimension of
$X$ and all small enough $\varepsilon$ we have
(recalling that according to Stirling's formula, $\log m!=m\log m+O(m)$)
\[\log N(H,\varepsilon)\leqslant 
  s(D-C)\left(\frac2\varepsilon\right)^{\frac{2}{2\alpha-1}}
  \log\frac1\varepsilon\]
For all $t>2/(2\alpha-1)$, the quantity
$N(H,\varepsilon)\exp(-(1/\varepsilon)^t)$ is therefore bounded. It follows that
\[\ucrit_{\mathscr{P}}\hcube(X,(n^{-\alpha}))\leqslant\frac{2}{2\alpha-1}.\]

To get the lower bound, we start by assuming $\dim X>1$ (otherwise,
take $p>1/\dim X$ so that $\dim X^p >1$ and observe that there is
a bi-Lipschitz embedding from $\hcube(X^p,(n^{-\alpha}))$ to
$\hcube(X,(n^{-\alpha}))$).

From Frostman's lemma there is a non-zero Borel probability measure $\nu$ on $X$
such that $\nu(B(x,r))\leqslant C r$ for all $r$. As before we define 
$\mu:=\otimes_{n=1}^{+\infty} \nu$
which is a Borel probability measure on $H=\hcube(X,(n^{-\alpha}))\simeq X^{\mathbb{N}}$.
We want to precisely estimate the $\mu$-measure of small balls in $H$. Fix a point
$\bar x\in H$. For convenience, we introduce the notation $\bar a=(n^{-\alpha})_n$,
$\bar a^k=(n^{-\alpha})_{n\geqslant k}$ and we define similarly $\bar x^k$. Let
also $S_{a_n}(x,r)$ be the sphere of center $x$ and radius $r$ in $X_{a_n}$.
We can write 
\[B(\bar x,r)=\bigcup_{0\leqslant r_1\leqslant r} S_{a_1}(x_1,r_1)\times
  B\big(\bar x^2,\sqrt{r^2-r_1^2}\big)\]
where the right factor is a ball of $\hcube(X,\bar a^2)$. Denoting by
$\sigma$ the push-forward of the measure $\nu$
by the map $x\mapsto d_{a_1}(x_1,x)$,
we have
\[\nu(B(x_1,r))=\int_0^r \,\sigma(dr_1)\]
and by Fubini's theorem
\[\mu(B(\bar x,r)) = \int_0^r \mu\left(B\big(\bar x^2,\sqrt{r^2-r_1^2}\big)\right)
  \,\sigma(dr_1)\]
We know that $\int_0^r \,\sigma(dr_1)\leqslant C r/a_1$ for all $r>0$,
thus there exists a coupling measure $\Pi$ on $\mathbb{R}^+\times\mathbb{R}^+$
supported on $\{(u,v)| u\geqslant v\}$ such that its first marginal is equal
to $\sigma$ and its second marginal is lesser than or equal to $(C/a_1)dv$
(with $dv$ the Lebesgue measure). One indeed
can take for $\Pi$ the increasing rearrangement between these two measures
(see e.g. \cite{Villani2} page 7 for a definition).
Using that the left factor in the following integrand is non-increasing,
we get
\begin{eqnarray*}
\mu(B(\bar x,r)) &=& 
  \int_0^r \mu\left(B\big(\bar x^2,\sqrt{r^2-r_1^2}\big)\right) \,\sigma(dr_1)\\
  &=& \int_0^r\int_{\mathbb{R}^+} \mu\left(B\big(\bar x^2,\sqrt{r^2-r_1^2}\big)\right)
      \,\Pi(dr_1 dv) \\
  &\leqslant&  \int_0^r\int_{\mathbb{R}^+} 
      \mu\left(B\big(\bar x^2,\sqrt{r^2-v^2}\big)\right)
      \,\Pi(dr_1 dv) \\
  &\leqslant& \int_0^r \mu\left(B(\bar x^2,\sqrt{r^2-v^2})\right) (C/a_1)dv
\end{eqnarray*}
Assume, given an integer $M$, that there is a constant $C_M^{\bar a}$
such that $\mu(B(\bar x,r))\leqslant C_M^{\bar a} r^M$ for all $r$. Then,
using a change of variable $v=r\cos \theta$, the above inequality
yields
\[\mu(B(\bar x,r))\leqslant C_M^{\bar a^2}\frac{C}{a_1} 
 \left(\int_0^{\frac\pi2} sin^{M+1}\theta \,d\theta \right) r^{M+1}\]
We know that the Wallis integral is asymptotically equivalent
to $\sqrt{\pi/2(M+1)}$, so that there is a positive constant $D$
depending on $C$ such that
\[\mu(B(\bar x, r))\leqslant \frac{D^M}{\sqrt{M!}\prod_1^M a_n} r^M\]

Defining an integer valued function $M$ such that $M(r)\sim r^{-\beta}$,
we get
\[\log\mu(B(\bar x, r))\leqslant \left(\beta(\alpha-\frac12)-1\right) 
  M(r)\log\frac1r+ O(M(r))\]
so that whenever $\beta<2/(2\alpha-1)$, we have
\[\log\mu(B(\bar x, r))\leqslant -E\left(\frac1r\right)^\beta\log\frac1r\]
for some positive constant $E$, and we deduce from Frostman's lemma that
$\crit_\mathscr{P} H \geqslant 2/(2\alpha-1)$.
\end{proof}

Corollary \ref{coro:application} from the introduction
is a direct consequence of the above result.
\begin{proof}[Proof of Corollary \ref{coro:application}]
Assume there is a bi-Lipschitz embedding from the Hilbert cube
$\hcube(X,(n^{-\alpha}))$ to $\hcube(Y,(n^{-\beta}))$
where $X$ has positive Hausdorff dimension and $Y$ has finite
upper Minkowski dimension. Then by Proposition \ref{prop:polydecay}
and the monotonicity property, we have
\[\frac2{2\alpha-1}\leqslant \crit_\mathscr{P} \hcube(X,(n^{-\alpha}))
\leqslant \ucrit_\mathscr{P} \hcube(Y,(n^{-\beta}))\leqslant \frac2{2\beta-1}\]
which implies $\beta\leqslant \alpha$.
\end{proof}

\part{Largeness of Wasserstein spaces}

\section{Wasserstein spaces}\label{sec:recalls}

For a detailed introduction on optimal transport, the interested reader can for
example consult \cite{Villani}, or \cite{Santambrogio} for a more concise 
overview. 
Optimal transport is about moving a given amount of material from one
distribution to another with the least total cost, where the cost
to move a unit of mass between two points is given by a cost function.
Here the cost function is related to a metric, and optimal transport
gives a metric on a space of measures. Let us give a few precise definitions and 
the properties we shall need.

Given an exponent $p\in[1,\infty)$, if $(X,d)$ is a general metric space,
always assumed to be Polish (complete separable),
and endowed with its Borel 
$\sigma$-algebra, its $L^p$ \emph{Wasserstein space}  is
the set $\mathscr{W}_p(X)$ of (Borel) probability measures $\mu$ on $X$ whose $p$-th 
moment is finite:
\[\int d(x_0,x)^p \,\mu(dx)<\infty\qquad\mbox{ for some, hence all }x_0\in X\]
endowed with the following metric: given $\mu,\nu\in\mathscr{W}_p(X)$ one sets
\[\wassd_p(\mu,\nu)=\left(\inf_\Pi \int_{X\times X} d(x,y)^p\, 
  \Pi(dx dy)\right)^{1/p}\]
where the infimum is over all probability measures $\Pi$ on $X\times X$
that project to $\mu$ on the first factor and to $\nu$ on the second one.
Such a measure is called a transport plan between $\mu$ and $\nu$, and is
said to be optimal when it achieves the infimum. 
The function $d^p$ is called the cost function, and the value of
$\int_{X\times X} d(x,y)^p\,\Pi(dx dy)$ is the \emph{total cost} of $\Pi$.

In this setting, an optimal
transport plan always exists. Note that when $X$ is compact, the set $\mathscr{W}_p(X)$
is equal to the set $\mathscr{P}(X)$ of all probability measures on $X$
and $\wassd_p$ metrizes the weak topology.

The name ``transport plan'' is suggestive: it is a way to describe what amount of
mass is transported from one region to another.

One very useful tool to study optimal transport is \emph{cyclical mononotonicity}.
Given a cost function $c$ ($=d^p$ here) on $X\times X$, one says
that a set $S\subset X\times X$ is ($c$-)cyclically monotone if
for all families of pairs $(x_0,y_0),\dots,(x_k,y_k)\in S$, one
has
\[c(x_0,y_0)+\dots+c(x_k,y_k)\leqslant c(x_0,y_1)+\dots+c(x_{k-1},y_k)+c(x_k,y_0)\]
in words, one cannot reduce the total cost to move a unit amount of mass
from the $x_i$ to the $y_i$ by permuting the target points. A transport
plan $\Pi$ is said to be cyclically monotone if its support is. Using
continuity of the cost we use here, it is easy to see that an optimal
transport plan must be cyclically monotone. It is a non-trivial result 
that the reciprocal is also true, see \cite{Villani}.

\section{Embedding powers}\label{sec:proof}

This section is logically independent of the rest of the article.
We prove Theorem \ref{theo:embedding} and consider its optimality
and its dynamical consequence.

\subsection{Proof of Theorem \ref{theo:embedding}}

The first power of $X$ embeds isometrically by $x\to\delta_x$ where $\delta_x$ 
is  the Dirac mass at a point. To construct an embedding $f$ of
a higher power of $X$ into its Wasserstein space,  the idea
is to encode a tuple by a measure supported
on its elements, without adding any extra symmetry:
one should be able to distinguish $f(a,b,\ldots)$ from $f(b,a,\ldots)$. 
Define the map 
\begin{eqnarray*}
f:\qquad\qquad\qquad X^k &\to& \mathscr{W}_p(X) \\
\bar x = (x_1,\ldots,x_k) &\mapsto& \alpha \sum_{i=1}^k \frac1{2^i} \delta_{x_i}
\end{eqnarray*}
where $\alpha=1/(1-2^{-k})$ is a normalizing constant. This choice
of masses moreover ensures that different subsets of the tuple have different masses.
This map obviously has the intertwining property since
 $\varphi_\#(\delta_x)=\delta_{\varphi(x)}$.

\begin{lemm}
The map $f$ is $(\alpha/2)^{\frac1p}$-Lipschitz when
$X^k$ is endowed with the metric $d_p$.
\end{lemm}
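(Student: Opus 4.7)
The plan is to exhibit an explicit transport plan between $f(\bar x)$ and $f(\bar y)$ and bound its cost. Since the two measures $f(\bar x)$ and $f(\bar y)$ share the same sequence of masses $(\alpha/2^i)_{1\leqslant i\leqslant k}$ indexed in the same order, the most natural candidate is the ``coordinate by coordinate'' transport
\[\Pi := \alpha\sum_{i=1}^k \frac{1}{2^i}\,\delta_{(x_i,y_i)},\]
which moves the mass $\alpha/2^i$ sitting at $x_i$ directly to $y_i$.

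First I would check that $\Pi$ is an admissible transport plan, i.e.\ that its marginals are $f(\bar x)$ and $f(\bar y)$; this is immediate from the definition since pushing forward by the first (resp.\ second) projection recovers $\alpha\sum 2^{-i}\delta_{x_i}$ (resp.\ $\alpha\sum 2^{-i}\delta_{y_i}$). Then, since $\Pi$ is admissible, $\wassd_p^p(f(\bar x),f(\bar y))$ is at most the total cost
\[\int_{X\times X} d(x,y)^p\,\Pi(dx\,dy) = \alpha\sum_{i=1}^k \frac{1}{2^i} d(x_i,y_i)^p.\]

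Finally I would use the trivial pointwise bound $\alpha/2^i\leqslant \alpha/2$ valid for every $i\geqslant 1$, which yields
\[\wassd_p^p(f(\bar x),f(\bar y)) \leqslant \frac{\alpha}{2}\sum_{i=1}^k d(x_i,y_i)^p = \frac{\alpha}{2}\,d_p(\bar x,\bar y)^p,\]
and take $p$-th roots to conclude. There is no real obstacle here: the argument only uses the admissibility of $\Pi$ and the crude bound on the weights; the subtler direction is the lower bound on $\wassd_p(f(\bar x),f(\bar y))$, which presumably will require cyclical monotonicity and is treated in a subsequent lemma.
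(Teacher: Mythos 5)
Your argument is correct and is exactly the paper's proof: the same coordinate-by-coordinate plan $\alpha\sum_i 2^{-i}\,\delta_{x_i}\otimes\delta_{y_i}$, the same cost computation, and the same bound $\alpha/2^i\leqslant\alpha/2$. Nothing to add.
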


\begin{proof}
There is an obvious transport plan from an image $f(\bar x)$ to another $f(\bar y)$, given by
$\alpha\sum_i 2^{-i} \delta_{x_i}\otimes\delta_{y_i}$. Its $L^p$ cost is 
\[\alpha\sum_i 2^{-i} d(x_i,y_i)^p \leqslant \alpha/2 \sum_i d(x_i,y_i)^p\]
so that $\wassd_p(f(\bar x),f(\bar y)) \leqslant (\alpha/2)^{\frac1p} d_p(\bar x,\bar y)$.
\end{proof}

Our goal is now to bound $\wassd_p(f(\bar x),f(\bar y))$ from below. 
The very formulation of the Wasserstein metric makes it more difficult
to give lower bounds than upper bounds. One classic way around this issue
is to use a dual formulation (Kantorovich duality) that expresses
the minimal cost in terms of a supremum. Here we give a more direct,
combinatorial approach based on cyclical monotonicity.

The cost of all
transport plans below are computed with respect to the cost $d^p$, where $p$ is fixed.

\subsubsection{Labelled graphs}

To describe transport plans, we shall use \emph{labelled graphs}, defined as tuples
$G=(V,E,m,m_0,m_1)$ where $V$ is a finite subset of $X$, $E$ is a set of
pairs $(x,y)\in V^2$ where $x\neq y$ (so that $G$ is an oriented graph without loops),
$m$ is a function $E\to[0,1]$ and $m_0, m_1$ are functions $V\to[0,1]$. 
An element of $V$ will usually be denoted by $x$ if it is thought of as a starting point,
$y$ if it is thought of as a final point, and $v$ if no such assumption is made.

To any transport
plan between finitely supported measures, one can associate a labelled graph
as follows.
\begin{defi}
Let $\mu,\nu$ be probability measures supported on finite sets $A,B\subset X$
and let $\Pi$ be any transport plan from $\mu$ to $\nu$. We define a labelled
graph $G^\Pi$ by: $V^\Pi=A\cup B$, 
\[E^\Pi=\supp\Pi\setminus\Delta=\big\{(x,y)\in X^2\,\big|\,
  x\neq y\mbox{ and }\Pi(\{x,y\})>0\big\},\] 
$m^\Pi(x,y)=\Pi(\{x,y\})$, $m^\Pi_0(x)=\mu(\{x\})$ and $m^\Pi_1(y)=\nu(\{y\})$.
\end{defi}
In other words, the graph encodes the initial and final measures and the amount
of mass moved from any given point in $\supp\mu$ to any given point in $\supp\nu$.
The transport plan itself can be retrieved from its graph; for example its cost
is 
\[c_p(\Pi)=\sum_{e\in E} m^\Pi(e)d(e^-,e^+)^p\]
where $e^-$ and $e^+$ are the starting and ending points of the edge $e$.

Not every labelled graph encodes a transport plan between two measures. We say that
$G$ is \emph{admissible} if:
\begin{itemize}
\item $\sum_V m_0(v)=\sum_V m_1(v)=1$, 
\item for all $e\in E$, $m(e)>0$,
\item for all $v\in V$, $m_0(v)+\sum_{e=(x,v)\in E} m(e)-\sum_{e=(v,y)\in E} m(e)=m_1(v)$
  (this is mass invariance),
  $\sum_{e=(x,v)\in E} m(e)\leqslant m_1(v)$ and $\sum_{e=(v,y)\in E} m(e)\leqslant m_0(v)$.
\end{itemize}
A labelled graph is admissible if and only if it is the graph of some transport plan.
The next steps of the proof shall give some information on the graphs of optimal plans.

\subsubsection{The graph of some optimal plan is a forest}

Let us introduce some notation related to a given labelled graph
$G$. A \emph{path} is a 
tuple of edges $P=(e_1,\ldots,e_l)$ such that $e_i$ has an endpoint in common
with $e_{i+1}$ for all $i$. If moreover $e_i^+=e_{i+1}^-$ holds for all $i$,
we say that $P$ is an \emph{oriented path}.
We define the \emph{unitary cost} of $P$ as the cost of a unit mass 
travelling along $P$, that is $c(P)=\sum_{i=1}^{l} d(e_i^-,e_i^+)^p$, and
the \emph{flow} of $P$ as the amount of mass travelling along $P$,
that is $\phi(P)=\min_i m(e_i)$. Cycles and oriented cycles are defined
in an obvious, similar way; a graph is a \emph{forest} if it contains no cycle.

\begin{lemm}
If $\Pi$ is an optimal plan between any two finitely supported measures
$\mu,\nu$, then $G^\Pi$ contains no oriented cycle.
\end{lemm}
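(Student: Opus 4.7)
The plan is to proceed by contradiction via cyclical monotonicity of $\supp \Pi$, which the excerpt has just recalled. Suppose $G^\Pi$ contains an oriented cycle, and extract a shortest such cycle, which I may write as $v_0 \to v_1 \to \cdots \to v_{l-1} \to v_0$ with consecutive vertices distinct. Each edge $e = (v_i, v_{i+1 \bmod l})$ has $m^\Pi(e) = \Pi(\{e\}) > 0$, so all $l$ pairs $(v_i, v_{i+1 \bmod l})$ lie in $\supp \Pi$ and can be fed to the cyclical monotonicity inequality.

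The key trick is to order these pairs \emph{against} the direction of the cycle, so that the forward cyclic shift of the $y$-coordinate that appears in the definition of cyclical monotonicity projects each shifted pair onto the diagonal. Explicitly, I set $(x_i, y_i) := (v_{-i \bmod l}, v_{-i+1 \bmod l})$ for $i = 0, \ldots, l-1$; then $y_{i+1 \bmod l} = v_{-i \bmod l} = x_i$, so $c(x_i, y_{i+1 \bmod l}) = d(x_i,x_i)^p = 0$. Cyclical monotonicity then yields
$$\sum_{i=0}^{l-1} d(v_{-i \bmod l}, v_{-i+1 \bmod l})^p \;\leqslant\; \sum_{i=0}^{l-1} 0 = 0,$$
forcing every edge-length along the cycle to vanish. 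This contradicts the fact that $G^\Pi$ is a loopless oriented graph, in which every edge has distinct endpoints.

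The only real obstacle is keeping the indices straight so that it is the forward shift, not its inverse, that collapses the pairs onto the diagonal; traversing the cycle backwards is precisely what achieves this. If one would rather not invoke cyclical monotonicity at all, there is a direct replacement: set $\phi := \min_i m^\Pi(v_i, v_{i+1 \bmod l}) > 0$, subtract $\phi$ from each edge mass along the cycle, and add $\phi$ to each diagonal point $(v_i, v_i)$. A quick check shows that both marginals are preserved and the total cost strictly decreases by $\phi \sum_i d(v_i, v_{i+1 \bmod l})^p > 0$, contradicting the optimality of $\Pi$.
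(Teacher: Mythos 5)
Your proof is correct, and in fact you give two valid arguments where the paper gives one. Your primary argument feeds the $l$ edges of the oriented cycle, traversed backwards, directly into the cyclical monotonicity inequality so that the shifted sum $\sum_i c(x_i,y_{i+1})$ collapses onto the diagonal and vanishes; since the graph is loopless, each edge has distinct endpoints and hence strictly positive cost, giving the contradiction. The index bookkeeping ($y_{i+1}=x_i$) checks out, and the pairs do lie in $\supp\Pi$ since they are atoms of $\Pi$. The paper, despite announcing cyclical monotonicity, actually runs your ``direct replacement'': subtract the minimal edge mass $\phi$ along the cycle to obtain a new admissible labelled graph with the same marginals $m_0,m_1$ and cost strictly smaller by $\phi\sum_i d(v_i,v_{i+1})^p>0$, contradicting optimality of $\Pi$. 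So your fallback is essentially the paper's proof verbatim, while your main argument is a cleaner invocation of cyclical monotonicity as a black box; the trade-off is that the monotonicity route uses the (easy) implication ``optimal $\Rightarrow$ cyclically monotone'' quoted earlier in the paper, whereas the mass-rerouting route is self-contained and foreshadows the perturbation technique used in the next lemma to kill non-oriented cycles.
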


\begin{proof}
This is a direct consequence of the cyclic monotonicity of optimal plans:
if there were points $v_1,v_2,\ldots,v_n$ in $V^\Pi$ such that $v_n=v_1$ and
$m(i):=m^\Pi(v_i,v_{i+1})>0$ for all $i<n$, then by subtracting the minimal value of
$m_i$ from each of them one would get an new admissible labelled graph 
with $m_0=m_0^\Pi$ and $m_1=m_1^\Pi$ and cost less than the cost of $G^\Pi$.
This new graph would give a new transport plan from $\mu$ to $\nu$,
cheaper than $\Pi$.
\end{proof}

An optimal plan can a priori have non-oriented cycles, but up to changing the plan (without
changing its cost), we can assume it does not.
\begin{lemm}
Between any two finitely supported measures $\mu,\nu$, there is an optimal
plan $\Pi$ such that $G^\Pi$ is a forest.
\end{lemm}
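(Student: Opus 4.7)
The plan is to remove non-oriented cycles one by one while preserving optimality, using a flow-rerouting argument along cycles, until only a forest remains.

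Start with any optimal plan $\Pi_0$; by the previous lemma $G^{\Pi_0}$ has no oriented cycle, but it may contain non-oriented ones. Suppose $C=(e_1,\dots,e_l)$ is a simple cycle through vertices $v_1,\dots,v_l,v_{l+1}=v_1$. Fix a traversal direction and split $C=C^+\sqcup C^-$ where $C^+$ consists of the edges oriented along the traversal and $C^-$ of those oriented against it. For $\varepsilon$ small (of either sign), define a new labelling $m_\varepsilon$ that agrees with $m^{\Pi_0}$ outside $C$, and satisfies $m_\varepsilon(e)=m^{\Pi_0}(e)+\varepsilon$ for $e\in C^+$ and $m_\varepsilon(e)=m^{\Pi_0}(e)-\varepsilon$ for $e\in C^-$, keeping $m_0,m_1$ unchanged. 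A direct check at each vertex $v_i$ shows that the mass contributed by $e_{i-1}$ changes the net in-flow by $+\varepsilon$ and that of $e_i$ by $-\varepsilon$, regardless of their orientations, so mass invariance is preserved and $m_\varepsilon$ is still admissible (for $\varepsilon$ in a neighbourhood of $0$).

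The cost of the corresponding transport plan $\Pi_\varepsilon$ satisfies
\[c_p(\Pi_\varepsilon)=c_p(\Pi_0)+\varepsilon\bigg(\sum_{e\in C^+}d(e^-,e^+)^p-\sum_{e\in C^-}d(e^-,e^+)^p\bigg).\]
Since $\Pi_0$ is optimal, this linear function of $\varepsilon$ must be minimized at $\varepsilon=0$, hence the coefficient vanishes and $c_p(\Pi_\varepsilon)=c_p(\Pi_0)$ for every $\varepsilon$ in the admissible range. I then push $\varepsilon$ (with the sign that does not alter the cost anyway) up to $\min_{e\in C^-}m^{\Pi_0}(e)$, at which point at least one edge of $C$ attains mass $0$ and disappears. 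The resulting $\Pi_1$ is still an optimal transport plan between $\mu$ and $\nu$, and $|E^{\Pi_1}|<|E^{\Pi_0}|$.

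Iterating this procedure, the edge count strictly decreases at each step, so after finitely many iterations we obtain an optimal plan $\Pi$ whose graph $G^\Pi$ has no cycle, i.e.\ is a forest. The only point that needs a bit of care is the linear-algebraic verification that the perturbation along a simple cycle preserves mass balance at each vertex; this is the main step but it is just bookkeeping using the four cases for the orientations of the two edges of $C$ incident to a given vertex. No essentially new difficulty arises beyond what appeared in the previous lemma.
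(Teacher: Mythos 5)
Your argument is essentially the paper's: both perturb the edge masses along a simple non-oriented cycle by $\pm\varepsilon$ according to whether the edge agrees with the traversal direction (the paper phrases this via the maximal oriented subpaths $P_i$, $Q_i$, which is the same decomposition as your $C^+$, $C^-$), use linearity of the cost and optimality to conclude the cost is unchanged, and push $\varepsilon$ to its extreme value to delete an edge, iterating until no cycle remains. The approach and all key steps coincide, so no further comparison is needed.
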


\begin{proof}
Let $\Pi$ be any optimal plan from $\mu$ to $\nu$, and let $G_0=G^\Pi$ be its graph.

A non-oriented cycle is determined by two sets of vertices $x_1,\ldots,x_n$ and $y_1,\ldots, y_n$
and two sets of oriented paths $P_i : x_i\to y_i$, $Q_i : x_i \to y_{i+1}$ where
$y_{n+1}:=y_1$, see Figure \ref{fig:cycle}.

\begin{figure}\begin{center}
\input{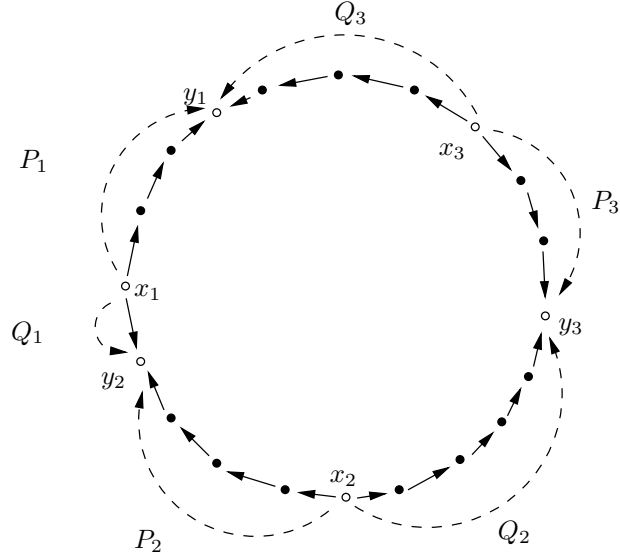}
\caption{A non-oriented cycle: $x_i$'s and $y_i$'s are the vertices where the edges
  change orientation.}\label{fig:cycle}
\end{center}\end{figure}

Consider a minimal non-oriented cycle of $G_0$,
so that no two paths among all $P_i$'s and $Q_i$'s share an edge.

One can construct a new admissible labelled graph $G_1$, with the same vertex labels
$m_0$ and $m_1$ as $G$, by adding a small
$\varepsilon$ to all $m(e)$ where $e$ appears in  some $P_i$, and subtracting the same
$\varepsilon$ from all $m(e)$ where $e$ appears in some $Q_i$. This operation adds
$\varepsilon$ to $\phi(P_i)$ and $-\varepsilon$ to $\phi(Q_i)$, thus it adds
$\varepsilon\sum_i c(P_i)-c(Q_i)$ to the cost of $\Pi$.

Since $\Pi$ is optimal, one cannot reduce its cost by this operation. This implies
that $\sum_i c(P_i)-c(Q_i) = 0$. By operating as above with $\varepsilon$ equal to
plus or minus the minimal value of all $m(e)$ where $e$ appears in a $P_i$ or in 
a $Q_i$, one designs the wanted new admissible graph $G_1$.

Now, $G_1$ has its edge set included in the edge set of $G$, with at least one
less oriented cycle. By repeating this operation, one constructs an admissible
labelled graph $G$ without cycle, that has the same total cost and the same vertex labels
as $G_0$. The transport plan defined by $G$ is therefore optimal, from
$\mu$ to $\nu$.
\end{proof}

The non-existence of cycles has an important consequence.
\begin{lemm}\label{lemm:forest}
Let $\Pi$ be a transport plan between two finitely supported measures
$\mu$ and $\nu$, whose graph is a forest. If there is some real number $r$
such that all $m^\Pi_0(v)$ and all $m^\Pi_1(v)$ are integer multiples of
$r$, then all $m^\Pi(e)$ are integer multiples of $r$.
\end{lemm}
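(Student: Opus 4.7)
The plan is to proceed by induction on the number of edges in the forest, using the fact that every nonempty finite tree has at least one leaf (a vertex of degree one in the underlying undirected graph).

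First I would handle the base case: if $G^\Pi$ has no edges, there is nothing to prove. For the inductive step, since $G^\Pi$ is a forest with at least one edge, at least one of its components is a nontrivial tree, which must contain a leaf $v$. Let $e$ be the unique edge of $E^\Pi$ incident to $v$. The mass conservation condition at $v$,
\[m^\Pi_0(v)+\sum_{e'=(x,v)\in E^\Pi} m^\Pi(e')-\sum_{e'=(v,y)\in E^\Pi} m^\Pi(e')=m^\Pi_1(v),\]
reduces, because $v$ has a single incident edge, to $m^\Pi(e)=\pm(m^\Pi_1(v)-m^\Pi_0(v))$, where the sign depends on whether $e$ points into or out of $v$. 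Since $m^\Pi_0(v)$ and $m^\Pi_1(v)$ are integer multiples of $r$ by hypothesis, so is $m^\Pi(e)$.

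Next I would remove the leaf $v$ and the edge $e$ from $G^\Pi$, after transferring the mass of $e$ to the vertex labels of its other endpoint $v'$: if $e=(v,v')$ I replace $m^\Pi_0(v')$ by $m^\Pi_0(v')-m^\Pi(e)$, and if $e=(v',v)$ I replace $m^\Pi_1(v')$ by $m^\Pi_1(v')-m^\Pi(e)$. The resulting labelled graph $G'$ is still admissible (it encodes the transport plan $\Pi$ restricted to $(X\setminus\{v\})\times(X\setminus\{v\})$ together with the obvious marginals), is still a forest (we only removed one edge and one vertex), has strictly fewer edges, and its vertex labels are still integer multiples of $r$ by the previous step. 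The induction hypothesis applies and yields that all remaining $m^\Pi(e')$ are integer multiples of $r$, which together with the already treated edge $e$ concludes the proof.

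There is no real obstacle here: the argument is entirely combinatorial, and the only subtlety is the bookkeeping of the two possible orientations of $e$ at the leaf $v$, which just determines whether the updated marginal is the source or the target marginal at $v'$. This makes the induction completely mechanical once one observes that a finite forest always has a leaf to peel off.
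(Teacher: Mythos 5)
Your proof follows essentially the same route as the paper's: peel off a leaf of the forest, read off its edge mass from the mass-conservation identity, adjust the labels, and induct. The one thing to fix is the bookkeeping at $v'$: as written, your update breaks mass conservation in the reduced graph, so the leaf formula could not be reapplied at the next step. Concretely, if $e=(v,v')$ the mass $m^\Pi(e)$ arrives at $v'$ and contributes to the \emph{target} marginal, so it is $m^\Pi_1(v')$ that must be replaced by $m^\Pi_1(v')-m^\Pi(e)$ (not $m^\Pi_0(v')$); symmetrically, if $e=(v',v)$ it is $m^\Pi_0(v')$ that must be replaced by $m^\Pi_0(v')-m^\Pi(e)$. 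You have the two cases swapped. With that correction the reduced graph is again admissible with labels that are integer multiples of $r$, and your induction goes through exactly as in the paper.
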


\begin{proof}
Let $G_0=G^\Pi=(V,E,m,m_0,m_1)$. If $G_0$ has no edge, then we are done. Otherwise,
$G_0$ has a leaf, that is a vertex $x_0$ connected to exactly one
vertex $y_0$, by an edge $e_0$. Assume for example
that $e_0=(x_0,y_0)$ (the other case is treated similarly). 
Then $m(e_0)=m_0(x_0)-m_1(x_0)$ is an integer
multiple of $r$.

Define $G_1=(V, E\setminus\{e_0\},m',m'_0,m'_1)$
where:
\begin{itemize}
\item $m'(e)=m(e)$ for all $e\in E\setminus\{e_0\}$, 
\item $m'_0(x_0)=m_0(x_0)+m(e_0)$,
\item $m'_0(x)=m_0(x)$ for all $x\in V\setminus\{x_0\}$,
\item $m'_1(y_0)=m_1(y_0)-m(e_0)$,
\item $m'_1(y)=m_1(y)$ for all $y\in V\setminus\{y_0\}$.
\end{itemize}
Then $G_1$ is still admissible (with different starting and
ending measures $\mu'$ and $\nu'$, though), and all $m'_0(v), m'_1(v)$ are integer
multiples of $r$. By induction, we are reduced to the case of an
edgeless graph.
\end{proof}

\subsubsection{End of the proof}

Now we are ready to bound $\wassd_p(f(\bar x),f(\bar y))$ from below in terms 
of $d_\infty(\bar x,\bar y)$. Let $\Pi$ be an optimal transport plan 
from $f(\bar x)$ to $f(\bar y)$
whose graph $G=(V,E,m,m_0,m_1)$ is a forest.

\begin{lemm}
For all index $i_0$, there is a 
path in $G$ connecting $x_{i_0}$ to $y_{i_0}$
\end{lemm}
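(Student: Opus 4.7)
The plan is to exploit the crucial feature of the weights $\alpha 2^{-i}$, namely that distinct subsets of $\{1,\dots,k\}$ give distinct values of $\alpha\sum_{i\in S} 2^{-i}$ (uniqueness of binary expansion). Combined with the forest structure of $G$, this will force $x_{i_0}$ and $y_{i_0}$ to lie in the same connected component.

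First I would fix the connected component $C$ of (the underlying undirected graph of) $G$ that contains $x_{i_0}$, and set
\[ A = \{ i \in \{1,\dots,k\} : x_i \in C \}, \qquad B = \{ j \in \{1,\dots,k\} : y_j \in C \}.\]
Using the definitions of $f(\bar x)$ and $f(\bar y)$ and summing the initial/final masses over $C$, one gets
\[ \sum_{v \in C} m_0(v) = \alpha \sum_{i \in A} 2^{-i}, \qquad \sum_{v \in C} m_1(v) = \alpha \sum_{j \in B} 2^{-j},\]
even if some entries of $\bar x$ (or $\bar y$) coincide, because each index contributes $\alpha 2^{-i}$ to the mass of exactly one vertex, all lying in $C$.

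Next, I would apply mass conservation globally on $C$. Using the admissibility relation $m_0(v) - m_1(v) = \sum_{e = (v,y)} m(e) - \sum_{e = (x,v)} m(e)$ and summing over $v \in C$, each edge with both endpoints in $C$ contributes zero, and since $C$ is a connected component, no edge has exactly one endpoint in $C$. Therefore $\sum_{v\in C} m_0(v) = \sum_{v\in C} m_1(v)$, i.e.\
\[ \sum_{i\in A} 2^{-i} = \sum_{j\in B} 2^{-j}.\]
By uniqueness of binary expansion on subsets of $\{1,\dots,k\}$, this forces $A = B$. In particular $i_0 \in A$ implies $i_0 \in B$, so $y_{i_0} \in C$. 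Since $G$ is a forest, $C$ is a tree, so there is a (unique) path in $G$ joining $x_{i_0}$ to $y_{i_0}$.

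The only subtle point, and what I expect to be the mildest obstacle, is bookkeeping when some of the $x_i$ coincide or some $x_i$ equals some $y_j$: one must remember that $m_0(v)$ and $m_1(v)$ are sums over all indices mapping to $v$, so the counting of indices by component is still unambiguous. Lemma \ref{lemm:forest} (integrality of edge masses) is not actually needed for this statement; the uniqueness of binary expansion applied at the level of vertex masses is enough.
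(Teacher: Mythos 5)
Your proof is correct, and it takes a genuinely different route from the paper's. The paper first invokes Lemma \ref{lemm:forest} to get that every edge mass $m(e)$ is an integer multiple of $\alpha 2^{-k}$, then isolates the set $E'$ of edges whose multiplier carries the bit $2^{k-i_0}$, and applies the handshake lemma: mass invariance forces every vertex other than $x_{i_0}$ and $y_{i_0}$ to have even $E'$-degree, so the two odd-degree vertices lie in the same $E'$-component. Your argument instead works purely at the level of vertex masses: summing the admissibility relation over a connected component kills all edge contributions, so the component's initial and final masses agree, and uniqueness of binary expansion of $\alpha\sum_{i\in A}2^{-i}=\alpha\sum_{j\in B}2^{-j}$ over subsets of $\{1,\dots,k\}$ forces the index sets to coincide. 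This is cleaner in that it needs neither Lemma \ref{lemm:forest} nor the forest structure (connectivity of the component already yields a path; the tree property only gives uniqueness, which the statement does not require), so your version of the lemma holds for the graph of an arbitrary transport plan between $f(\bar x)$ and $f(\bar y)$. What the paper's finer, bit-by-bit argument buys is slightly more information --- a path all of whose edges carry at least the mass $\alpha 2^{-k+i_0}\cdot 2^{-k}$-bit --- but the subsequent cost estimate only uses $\phi(P)\geqslant\alpha 2^{-k}$ via Lemma \ref{lemm:forest}, so your route plugs into the rest of the proof without loss. Your handling of coinciding entries ($m_0(v)$ being the sum of $\alpha 2^{-i}$ over all indices $i$ with $x_i=v$) is the right bookkeeping and is exactly the point where one must be careful.
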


\begin{proof}
The choice of $f$ shows that all $m_0(v),m_1(v)$ are integer multiples
of $\alpha 2^{-k}$, so that all $m(e)$ are integer multiples of $\alpha 2^{-k}$.
Let $n(e),n_0(v),n_1(v)\in \mathbb{N}$ be such that $m(e)=n(e) \alpha 2^{-k}$,
$m_0(v)=n_0(v) \alpha 2^{-k}$ and $m_1(v)=n_1(v)\alpha 2^{-k}$. Then the only
$v\in V=\supp f(\bar x)\cup\supp f(\bar y)$ such that $n_0(v)$ contains
$2^{k-i_0}$ in its base-$2$ expansion is $x_{i_0}$. Similarly,
the only $w\in V$ such that $n_1(w)$ contains $2^{k-i_0}$ in its base-$2$ expansion is
$y_{i_0}$. Let $E'\subset E$ be the set of edges $e$ such that $n(e)$ contains
$2^{k-i_0}$ in its base-$2$ expansion.

Any vertex $v$ such that $n_0(v)-n_1(v)$ does not contain $2^{k-i_0}$ in its 
base-$2$ expansion
must be adjacent to an even number of edges of $E'$ due to mass invariance.
Therefore the non-oriented graph induced
by $E'$ has exactly two points of odd degree: $x_{i_0}$ and $y_{i_0}$.
It is well known and a consequence of a simple double-counting argument
that a graph has an even number of odd degree vertices, from which it follows
that the $E'$-connected component of $x_{i_0}$ must contain $y_{i_0}$.
\end{proof}

From now on, fix $i_0$ an index that maximizes $d(x_i,y_i)$
and let $P_0$ be a minimal path between $x_{i_0}$ and $y_{i_0}$. Each
final point of each edge in this path has to be some $y_i$, all distinct by minimality,
so that $P_0$ has length at most
$k$. It follows by a convexity argument that $c(P_0)$
is at least $k(d(x_{i_0},y_{i_0})/k)^p$. Lemma \ref{lemm:forest}
implies $\phi(P)\geqslant \alpha 2^{-k}$ 
so that the cost of $\Pi$ is at least $\alpha 2^{-k} d( x_{i_0},y_{i_0})^p/k^{p-1}$.
We get
\[\wassd_p(f(\bar x),f(\bar y))\geqslant 
  \frac{\alpha^{\frac1p} 2^{-\frac kp}}{k^{1-\frac1p}} d_\infty(\bar x,\bar y)
  \geqslant \frac{1}{k (2^k-1)^{\frac1p}} d_p(\bar x,\bar y)\]
which ends the proof of Theorem \ref{theo:embedding}.

\subsection{Discussion of the embedding constants}\label{sec:constants}

One can wonder if the constants in Theorem \ref{theo:embedding} are optimal.
We shall see in the simplest possible example that they are off by at most 
a polynomial factor, then see how they can be improved in a specific case.

\begin{prop}\label{prop:optimality}
Let $X=\{0,1\}$ where the two elements are at distance $1$ and consider a map
$g : X^k\to\mathscr{W}_p(X)$ such that
\[m \,d_p(\bar x,\bar y)\leqslant \wassd_p(g(\bar x),g(\bar y)) \leqslant M\, d_p(\bar x,\bar y)\]
for all $\bar x, \bar y\in X^k$ and some positive constants $m,M$. Then
\[ m\leqslant \frac{1}{(2^k-1)^{\frac1p}}
  \quad\mbox{and}\quad
  \frac M m \geqslant \left(\frac{2^k-1}{k}\right)^{\frac1p}.\]
Moreover there is a map whose constants
satisfy $m = (2^k-1)^{-\frac1p}$ and $M/m\leqslant(2^k-1)^{\frac1p}$.
\end{prop}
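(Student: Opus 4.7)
My plan is to identify $\mathscr{W}_p(\{0,1\})$ isometrically with the segment $[0,1]$ endowed with the metric $(t,t')\mapsto|t-t'|^{1/p}$, via $\mu\mapsto\mu(\{0\})$: on a two-point space there is a unique transport plan, and for $\mu=t\delta_0+(1-t)\delta_1$, $\nu=t'\delta_0+(1-t')\delta_1$ its cost is $|t-t'|\cdot 1^p$. The problem then reduces to embedding the $d_p$-cube $\{0,1\}^k$ into this interval, whose Wasserstein diameter is $1$.

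For the two inequalities, given any bi-Lipschitz $g$, I would order the pairwise distinct values of $g$ as $t_1<t_2<\cdots<t_{2^k}$, setting $\bar z_i=g^{-1}(t_i)$. Since distinct vertices of the cube satisfy $d_p(\bar z,\bar z')\geqslant 1$, the lower Lipschitz bound gives $m^p\leqslant t_{i+1}-t_i$ for every $i$, and summing,
\[(2^k-1)m^p \;\leqslant\; t_{2^k}-t_1 \;\leqslant\; 1,\]
which is the first inequality. The same telescoping quantity also satisfies $t_{2^k}-t_1=\wassd_p(g(\bar z_1),g(\bar z_{2^k}))^p\leqslant M^p d_p(\bar z_1,\bar z_{2^k})^p\leqslant M^p k$, and comparing the two estimates of $t_{2^k}-t_1$ yields $(2^k-1)m^p\leqslant M^p k$, that is, $M/m\geqslant((2^k-1)/k)^{1/p}$.

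For the extremal map, I would fix a Gray code enumeration $\bar z_0,\bar z_1,\ldots,\bar z_{2^k-1}$ of $\{0,1\}^k$ (consecutive vertices differing in exactly one coordinate) and define $g(\bar z_j)$ to be the measure corresponding to $t_j=j/(2^k-1)$. By the triangle inequality along the enumeration the Gray code satisfies $d_H(\bar z_i,\bar z_j)\leqslant|i-j|$. Since $d_p(\bar z_i,\bar z_j)^p=d_H(\bar z_i,\bar z_j)$ and $\wassd_p(g(\bar z_i),g(\bar z_j))^p=|i-j|/(2^k-1)$, this translates into $\wassd_p^p\geqslant d_p^p/(2^k-1)$, achieving $m=(2^k-1)^{-1/p}$; the trivial estimate $|i-j|\leqslant 2^k-1\leqslant(2^k-1)d_H$ gives $M\leqslant 1$, hence $M/m\leqslant(2^k-1)^{1/p}$ as required.

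I foresee no real obstacle: once the identification of $\mathscr{W}_p(\{0,1\})$ with a one-dimensional space is in hand, the lower bounds are elementary packing, and the only nontrivial ingredient for the construction is the existence of Gray codes, which is classical. The sharpness of $m\leqslant(2^k-1)^{-1/p}$ reflects the unavoidable fact that $2^k$ points in a segment of length $1$ leave some consecutive pair at distance at most $(2^k-1)^{-1}$, and the Gray code enumeration is precisely what aligns this one-dimensional packing with the combinatorial geometry of the cube.
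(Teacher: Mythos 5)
Your proof is correct and follows essentially the same route as the paper: identify $\mathscr{W}_p(\{0,1\})$ with a segment, bound the gaps between the $2^k$ image points (your telescoping sum is just an explicit form of the paper's pigeonhole argument), and realize the extremal constants with a Gray code.
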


\begin{proof}
By homogeneity, it is sufficient to consider $p=1$, in which case $X^k$
is the $k$-dimensional discrete hypercube endowed with the Hamming
metric: two elements are at a distance equal to the number of bits by which they differ.
Moreover $\mathscr{W}_1(X)$ identifies with the segment $[0,1]$ endowed with the usual
metric $|\cdot|$: a number $t$ corresponds to the measure $t\delta_0+(1-t)\delta_1$.

The diameter of $X^k$ is $k$, so that the diameter of $g(X^k)$ is at most $Mk$.
Since $g(X^k)$ has $2^k$ elements, by the pigeon-hole principle at least two of them
are at distance at most $(2^k-1)^{-1}Mk$. Since the distance between their inverse images is
at least $1$, we get $m\leqslant (2^k-1)^{-1}Mk$ so that $M/m\geqslant(2^k-1)/k$.
The pigeon-hole principle also gives $m\leqslant (2^k-1)^{-1}$ simply by using that
$\mathscr{W}_1(X)$ has diameter $1$.

To get a map $g$ with  $M/m=(2^k-1)$, it suffices to use a Gray code: it is an enumeration
$x_1,x_2,\ldots,x_{2^k}$ of the elements of $X^k$, such that two consecutive
elements are adjacent (see for example \cite{Hamming}).
Letting $f(x_i):=(i-1)/(2^k-1)$ we get a map with $M\leqslant 1$ and  $m=(2^k-1)^{-1}$.
\end{proof}

Note that in Proposition \ref{prop:optimality},
one could improve the lower bound on $M/m$ by a factor
asymptotically of the order of $2^{\frac1p}$ by using the fact that
every element in $X^n$ has an antipode, that is an element at distance $n$ from it.

Let us give an example where the constants are much better.
\begin{exem}\label{exam:ultrametric}
Let $X=\{0,1\}^{\mathbb{N}}$ with the following metric: given
$x=(x^1,x^2,\ldots)\neq y=(y^1,y^2,\ldots)$ in $X$,
$d(x,y)=2^{-i}$ where $i$ is the least index such that $x^i\neq y^i$. 
Then given $k$, let $\ell$ be the least integer such that $2^\ell\geqslant k$
and let $w_1,\ldots,w_k\in \{0,1\}^\ell$ be distinct words on $\ell$
letters. For $x=(x^1,x^2,\ldots)\in X$ and $w=(w^1,\ldots,w^\ell)\in\{0,1\}^\ell$,
define $wx$ as the element $(w^1,w^2,\ldots,w^\ell,x^1,x^2,\ldots)$ of
$X$.

Now let $g:X^k\to\mathscr{W}_p(X)$ be defined by 
\[g\big(\bar x=(x_1,\ldots,x_k)\big) = \sum_{i=1}^k \frac1k \delta_{w_i x_i}.\]
For all $x,y\in X$ and all $i\neq j$, we have $d(w_i x,w_j y)\geqslant 2^{-\ell}\geqslant d(w_i x,w_i y)$.
It follows that 
\begin{eqnarray*}
\wassd_p(g(\bar x), g(\bar y)) &=& \left(\frac1k\sum_i 2^{-p\ell} d^p(x_i,y_i)\right)^{\frac1p} \\
  &=& \frac1{k^{\frac1p}2^{\ell}} d_p(\bar x,\bar y).
\end{eqnarray*}
For this example, we have $M=m$ and moreover $m$ has only the order of $k^{-1-\frac1p}$
instead of being exponentially small. 
\end{exem}

This example could be generalised to more general spaces, for example the
middle-third Cantor set. What is important is that the various components of a given
depth are separated by a distance at least the diameter of the components and that
the metric does not decrease too much between $d(x,y)$ and $d(wx,wy)$ (any bound
that is exponential in the length of $w$ would do).

\subsection{Dynamical largeness}\label{sec:dynamics}

In this section, $X$ is assumed to be compact. Given a continuous
map $\varphi:X\to X$, for any
$n\in\mathbb{N}$ one defines a new metric on $X$ by
\[d_{[n]}(x,y):= \max\{d(\varphi^i(x),\varphi^i(y));0\leqslant i\leqslant n\}.\]
Given $\varepsilon>0$, one says that a subset $S$ of $X$ is
$(n,\varepsilon)$-separated if $d_{[n]}(x,y)\geqslant \varepsilon$ whenever
$x\neq y\in S$. Denoting by $P(\varphi,\varepsilon,n)$ the maximal size of a 
$(n,\varepsilon)$-separated set, the topological entropy of $\varphi$ is defined as
\[\entropy(\varphi) := \lim_{\varepsilon\to 0} \limsup_{n\to+\infty} 
\frac{\log P(\varphi,\varepsilon,n)}{n}.\]
Note that this limit exists since $\limsup_{n\to+\infty} \frac1n \log P(\varphi,\varepsilon,n)$
is nonincreasing in $\varepsilon$.
The adjective ``topological'' is relevant since $\entropy(\varphi)$ does not depend upon the
distance on $X$, but only on the topology it defines.
The topological entropy is in some sense a global measure of the dependance on initial condition
of the considered dynamical system. 
The map $\times d:x\mapsto dx \mod 1$ acting on the circle
is a classical example, whose topological entropy is $\log d$.

Topological entropy was first introduced by Adler, Konheim and 
McAndrew \cite{Adler-Konheim-McAndrew} and the present definition was
given independently by Dinaburg \cite{Dinaburg} and Bowen \cite{Bowen}.

Now, the metric mean dimension is
\[\mdim(\varphi,d) := \liminf_{\varepsilon\to 0} \limsup_{n\to+\infty} 
  \frac{\log P(\varphi,\varepsilon,n)}{n|\log\varepsilon|}.\]
It is zero as soon as topological entropy is finite.
Note that Lindenstrauss and Weiss define the metric mean dimension using
covering sets rather than separated sets; but this does not matter since
their sizes are comparable.

Let us now prove that when $\entropy(\varphi)>0$, then 
$\varphi_\#:\mathscr{W}_p(X)\to\mathscr{W}_p(X)$ has positive 
metric mean dimension. 

\begin{proof}[Proof of Corollary \ref{coro:dynamics}]
Let $\varepsilon,\eta>0$ and $k$ be such that
$\eta\geqslant k(2^k-1)^{\frac1p}\varepsilon$. If $A$ is a $(n,\eta)$-separated
set for $(X,\varphi,d)$ then $A^k\subset X^k$ is a $(n,\eta)$ separated
set for $(X^k,\varphi_k,d_\infty)$. Then Theorem
\ref{theo:embedding} shows that $f(A^k)$ is a $(n,\varepsilon)$-separated
set for $(\mathscr{W}_p(X),\varphi_\#,\wassd_p)$, so that
\[P(\varphi_\#,\varepsilon,n)\geqslant \left(P(\varphi,k(2^k-1)^{1/p}\varepsilon,n)\right)^k.\]

Let $H<\entropy(\varphi)$ and $\beta<1$. For all
$\varepsilon>0$ small enough, and for arbitrarily large integer $n$
we have $P(\varphi,\varepsilon,n)\geqslant \exp(n H)$. Define
\[k=\left\lfloor \frac{\beta p(-\log\varepsilon)}{\log 2}\right\rfloor;\]
then $k(2^k-1)^{1/p}\varepsilon=O\left((-\log\varepsilon)\varepsilon^{1-\beta}\right)\to 0$
when $\varepsilon \to 0$.
Therefore, for all small enough $\varepsilon$, there are arbitrarily large
$n$ such that 
\begin{eqnarray*}
P(\varphi_\#,\varepsilon,n) &\geqslant& \exp(nHk) \\
  &\geqslant& \exp\left(nH\left(\frac{\beta p}{\log 2}(-\log \varepsilon)-1\right)\right) \\
\frac{\log P(\varphi_\#,\varepsilon,n)}{n(-\log \varepsilon)}  
  & \geqslant& \frac{H\beta p}{\log 2} - \frac{H}{-\log\varepsilon}\\
\mdim(\varphi_\#,\wassd_p)&\geqslant& \frac{H\beta p}{\log 2}
\end{eqnarray*}
Letting $H\to \entropy(\varphi)$ and $\beta\to 1$ gives
\[\mdim(\varphi_\#,\wassd_p)\geqslant p\frac{\entropy(\varphi)}{\log 2}\]
as claimed.
\end{proof}

In the case of the shift on certain metrics on
$\{0,1\}^{\mathbb{N}}$, one could
want to use the better bound obtained in Example \ref{exam:ultrametric}.
But the map $g$ defined there does not intertwin $\varphi_k$ and $\varphi_\#$,
and the method above does not apply.

\section{Embedding Hilbert cubes}\label{sec:HC}

In this last section we prove the two theorems about embeddings of Hilbert cubes
in Wasserstein spaces and deduce consequences on their critical parameters. 

\subsection{Embedding small Hilbert cube in the general case}

This section is devoted to the proof of Theorem \ref{theo:HC1}.
We use the same kind of map as in the proof of Theorem \ref{theo:embedding},
but with coefficients that decrease faster to get better point separation.

We assume here that $X$ is compact.
Let $\lambda,\beta\in(0,1)$ be real numbers to be more precisely chosen
afterward and consider the following map:
\begin{eqnarray*}
g:\quad\quad X^{\mathbb{N}} &\to& \mathscr{W}_2(X) \\
  \bar x =(x_1,x_2,\dots) &\mapsto& \frac{1-\beta}\beta\sum_{n=1}^\infty
  \beta^n \delta_{x_n}
\end{eqnarray*}
where $X^{\mathbb{N}}$ will be identified with $\hcube(X;(\lambda^n))$.
We choose $\beta<1/2$, so that $g$ is one-to-one. It is readily seen
to be a continuous map (when $X^{\mathbb{N}}$ is endowed with the
product topology), and we have to bound from below
$\wassd_2(g(\bar x),g(\bar y))$ for all $\bar x,\bar y\in X^{\mathbb{N}}$.

First, since $g(\bar x)$ gives a mass at least $1-\beta$ to $x_1$,
it gives a mass at most $\beta$ to $X\setminus\{x_1\}$, and \emph{any}
transport plan from $g(\bar x)$ to $g(\bar y)$ moves
a mass at least $1-2\beta$ from $x_1$ to $y_1$. 
We already have $\wassd_2(g(\bar x),g(\bar y))^2\geqslant (1-2\beta)d(x_1,y_1)^2$.

If all distances $d(x_n,y_n)$ are of the same order as $d(x_1,y_1)$,
then this first bound is sufficient for our purpose. Otherwise,
we shall reduce to an optimal transport problem involving partial measures.
Define a new map $g_2$ by
$g_2(\bar x)=\frac{1-\beta}\beta\sum_{n=2}^\infty
  \beta^n \delta_{x_n}$; its images are measures of mass $\beta$.
Note that all the theory of optimal transport applies to non probability
measures, as soon as the source and target measures have the same, finite
total mass.
Define a new cost function 
\[\tilde c(x,y) = \min\left(d(x,y)^2,d(x,y_1)^2+d(x_1,y)^2\right)\]

Let $\Pi$ be any transport plan from $g(\bar x)$ to $g(\bar y)$. Then it
can be written $\Pi=\Pi_1+\Pi_\rightarrow+\Pi_\leftarrow+\Pi_\leftrightarrow$
where:
\begin{itemize}
\item $\Pi_1$ has mass between $1-2\beta$ and $1-\beta$
 and is supported on $\{(x_1,y_1)\}$,
\item $\Pi_\rightarrow$ is supported on $\{x_2,x_3,\dots\}\times\{y_1\}$,
\item $\Pi_\leftarrow$ is supported on $\{x_1\}\times\{y_2,y_3,\dots\}$
  and has same mass as $\Pi_\rightarrow$,
\item $\Pi_\leftrightarrow$ is supported on 
  $\{x_2,x_3,\dots\}\times\{y_2,y_3,\dots\}$.
\end{itemize}
To see this, proceed as follows. 
First, letting $h:(n,m)\mapsto (x_n,y_m)$
there is a measure $\Pi'$ on $\mathbb{N}\times\mathbb{N}$
such that $h_\#\Pi'=\Pi$ and the marginals of
$\Pi'$ both are equal to $\frac{1-\beta}\beta\sum \beta^n \delta_n$.
This is a direct application of classical methods, see for example
the gluing lemma in \cite{Villani}.
Then, let $\Pi'_1$, $\Pi'_\leftrightarrow$, $\Pi'_\rightarrow$ and 
$\Pi'_\leftarrow$ be the restrictions of $\Pi'$ to $\{(1,1)\}$,
$\{2,3,\dots\}\times\{2,3,\dots\}$,
$\{2,3,\dots\}\times\{1\}$ and $\{1\}\times\{2,3,\dots\}$.
Then, setting $\Pi_*:=h_\#\Pi'_*$ produces the desired decomposition.

Let $m$ be the mass of $\Pi_\rightarrow$ (which equals the mass of 
$\Pi_\leftarrow$) and define 
\[\Pi_\rightarrow * \Pi_\leftarrow=\frac1m (p_1)_\#(\Pi_\rightarrow)\otimes 
(p_2)_\#(\Pi_\leftarrow)\]
where $p_i$ is the projection on the $i$-th factor. If
we were to identify $x_1$ to $y_1$, this would define a concatenation
of $\Pi_\rightarrow$ and $\Pi_\leftarrow$ (note that the use of a product
is sensible here, since the trajectories to be concatenated all would pass
through $x_1\simeq y_1$ and their is no specific coupling
between the $x_n$'s and the $y_m$'s to remember).

Define further $\tilde\Pi=\Pi_\leftrightarrow+\Pi_\rightarrow * \Pi_\leftarrow$.
It is in some sense the $g_2$ part of $\Pi$, in particular it has mass 
$\beta$.

Let us prove that, denoting
by $c(\Pi)$ the total cost of the transport plan $\Pi$ under the cost
function $c=d^2$, we have
\[c(\Pi)\geqslant c(\Pi_1) + \tilde c(\tilde \Pi)\]
The cost of $\Pi$ is the sum of the cost of its parts, and
the second term of the right-hand side is to bound from below
$c(\Pi_\rightarrow + \Pi_\leftarrow+\Pi_\leftrightarrow)$.
Consider a small amount of mass moved by this partial transport 
plan; it goes under $\Pi$ from some $x_i$  to some $y_j$ ($i,j\geqslant 2$)
either directly, or it is moved to $y_1$ and an equivalent
amount of mass is moved from $x_1$ to $y_j$. In the first case
we use $\tilde c\leqslant c$, in the second case we use
$\tilde c(x,y)\leqslant d(x,y_1)^2+d(x_1,y)^2$.

As already stated, $c(\Pi_1)\geqslant(1-2\beta)d(x_1,y_1)^2$, and
we have left to evaluate $\tilde c(\tilde \Pi)$.
Given $x,y$, set $d_{11}=d(x_1,y_1)$, $a=d(x,y_1)$ and $b=d(x_1,y)$.
By the triangle inequality, $a+b+d_{11}\geqslant d(x,y)$.
Using $a^2+b^2\geqslant \frac12(a+b)^2$, it comes
$\tilde c\geqslant \min(d^2,\frac12 d^2- dd_{11}+\frac12 d_{11}^2)$.
We shall bound $-dd_{11}$ by using
$(\sqrt{\varepsilon} d- d_{11}/\sqrt{\varepsilon} )^2\geqslant 0$ for any
positive $\varepsilon<1$ to be optimized later on. The inequality
\[\tilde c\geqslant \frac12(1-\varepsilon) d^2 
 -\frac12\left(\frac1\varepsilon-1\right) d_{11}^2\]
follows. We therefore get
$c(\Pi)\geqslant A d_{11}^2 + \frac{1-\varepsilon}2 c(\tilde\Pi)$
where
\[A=1-\beta\left(\frac32+\frac1{2\varepsilon}\right)\]
is positive if $\varepsilon$ is large enough (precisely
$\varepsilon>\beta/(2-3\beta)$).
Since $\tilde\Pi$ is a transport plan from $g_2(\bar x)$ to
$g_2(\bar y)$ where $g_2$ is merely $g$ composed with the left shift,
an induction shows that
\[c(\Pi)\geqslant A d(x_1,y_1)^2 +AB d(x_2,y_2)^2 + AB^2 d(x_3,y_3)^2+\dots\]
where
\[B=\frac\beta2(1-\varepsilon)\]
As a consequence, $g$ is sub-Lipschitz (with constant $\sqrt{A/B}$) from
$\hcube(X;(\lambda^n))$ to $\mathscr{W}_2(X)$ where $\lambda=\sqrt{B}$.
The condition on $\varepsilon$ implies that
\[ B<\beta\frac{1-2\beta}{2-3\beta}\]
and any such $B$ with $\beta<1/2$ can be obtained.
The optimal value for $\beta$ is $1/3$, which gives an upper bound
of $1/9$ on $B$. We can therefore get any $\lambda<1/3$.

\subsection{Embedding large Hilbert cube in the rectifiable case}

This section is devoted to the proof of Theorem \ref{theo:HC2}.
The idea is to use the self-similarity at all scales
of the unit cube $I^d$ to
embedd \emph{isometrically} $\hcube(I^d,\bar a)$ inside
$\mathscr{W}_2(I^d)$ for some sequences $\bar a$. The claimed result
will follow immediately, since a direct computation shows that a bi-Lipschitz
embedding $A\to B$ gives a bi-Lipschitz embedding 
$\mathscr{W}_2(A)\to\mathscr{W}_2(B)$ by push forward.

The first step is to find appropriately scaled copy of $I^d$ in itself;
the following is an elementary geometrical fact.
\begin{lemm}\label{lemm:cubes}
Let $\bar c=(c_n)_n$ be an $\ell^d$ sequence of positive numbers.
Then there exist a constant $K$ depending only on $d$ and $\sum c_n^d$
and a family of homotheties $h_n:I^d\to I^d$ 
with disjoint images and ratio $Kc_n$.
\end{lemm}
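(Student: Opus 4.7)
The plan is to place each homothety image $h_n(I^d)$ inside a carefully chosen standard dyadic sub-cube of $I^d$, ensuring these dyadic cubes are pairwise disjoint; then $h_n$ itself is taken to be an axis-aligned affine contraction of ratio $Kc_n$ anchored at a corner of its dyadic cube. Write $S := \sum_n c_n^d$. I would first reorder the sequence so that $(c_n)$ is non-increasing; this is harmless since the statement is symmetric in the pairing of indices with scaling ratios.

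For each $n$, let $k_n$ be the unique non-negative integer such that
\[ 2^{-k_n-1} < Kc_n \leqslant 2^{-k_n}, \]
where $K>0$ is a constant to be fixed below. The homothety image has side $Kc_n \leqslant 2^{-k_n}$, so any dyadic cube of side $2^{-k_n}$ can host it. Since two standard dyadic cubes are either disjoint or nested, and since $m<n$ implies $c_m \geqslant c_n$ and hence $k_m \leqslant k_n$, the strategy is to pick inductively a dyadic cube $Q_n$ of side $2^{-k_n}$ that is not contained in any previously chosen $Q_m$; this automatically makes the family pairwise disjoint.

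The inductive step succeeds by a simple volume count. There are $2^{dk_n}$ dyadic cubes of level $k_n$ in $I^d$, and each previously chosen $Q_m$ ($m<n$) contains exactly $2^{d(k_n - k_m)}$ of them. Hence the number of blocked dyadic cubes is at most
\[ \sum_{m<n} 2^{d(k_n - k_m)} = 2^{dk_n} \sum_{m<n} 2^{-dk_m} < 2^{dk_n}(2K)^d S. \]
Choosing $K$ so small that $(2K)^d S < 1$ — for instance $K = (3S^{1/d})^{-1}$, which depends only on $d$ and $S$ — leaves a positive number of available level-$k_n$ dyadic cubes at every step, so $Q_n$ can always be selected and the construction runs to infinity.

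The main subtle point is exactly the one resolved by the preliminary sorting: without the monotonicity $k_m \leqslant k_n$ for $m<n$, a small cube placed early could later be strictly nested inside a would-be choice $Q_n$, and the clean one-sided count above would fail. Once the sorting is in place the argument reduces to a routine greedy dyadic packing, yielding the required family of homotheties with ratio $Kc_n$ and a constant $K$ depending only on $d$ and $S$.
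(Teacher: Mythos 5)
Your argument is correct, and it takes a genuinely different route from the paper. The paper proves the lemma by induction on the dimension: after sorting and normalizing, it groups the terms of $\bar c$ into consecutive blocks whose $\ell^{d-1}$ norms are uniformly bounded and whose leading terms form an $\ell^1$ sequence, slices $I^d$ into slabs $I^{d-1}\times[a_i,b_i]$ of total height at most $1$, and places each block inside a slab using the $(d-1)$-dimensional case. Your proof instead is a single greedy dyadic packing: assign to each $n$ the dyadic generation $k_n$ with $2^{-k_n-1}<Kc_n\leqslant 2^{-k_n}$, and choose for $Q_n$ any level-$k_n$ dyadic cube not contained in a previously chosen one; the sorting guarantees $k_m\leqslant k_n$ for $m<n$, so non-containment forces disjoint interiors, and the count of blocked cubes, $\sum_{m<n}2^{d(k_n-k_m)}<2^{dk_n}(2K)^dS$, stays below $2^{dk_n}$ once $(2K)^dS<1$. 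This is more elementary (no induction on $d$, no block decomposition) and yields an explicit admissible constant $K=(3S^{1/d})^{-1}$; the paper's slicing argument, on the other hand, produces cubes aligned in slabs, which makes the subsequent separation property slightly more visible geometrically. One cosmetic point: when $Kc_n=2^{-k_n}$ exactly, two images anchored in adjacent dyadic cubes could share a boundary face; this is cured by halving $K$ and centering each image in its dyadic cube (which also supplies the strict separation between the cubes $C_n$ that the paper needs right after the lemma), so it is not a gap in substance.
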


\begin{proof}
Of course, the existence of such homotheties is equivalent to the existence
of disjoint cubes (oriented according to the coordinate axes)
of sidelength $Kc_n$ in the unit cube $I^d$.
Note that the condition $\bar c\in \ell^d$ is also necessary by volume considerations.
Figure \ref{fig:boxes} illustrates the idea of the proof.

\begin{figure}\begin{center}
\input{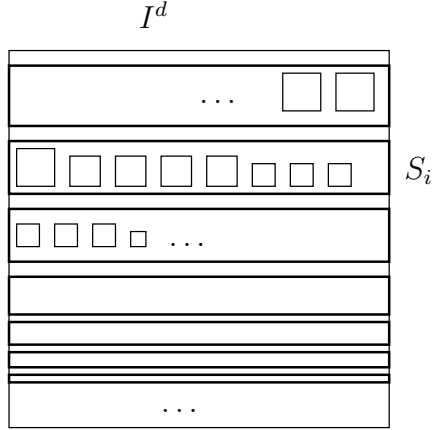}
\caption{After dividing the side-length sequence into blocks,
  we apply the induction hypothesis to each block to get
  adequate families of boxes in each slide $S_i$.}\label{fig:boxes}
\end{center}\end{figure}

Since the result is independent of the order of the terms of $\bar c$,
and since $\lim \bar c$ must be zero, we can assume that $\bar c$
is non-increasing. Up to a dilation 
we can moreover assume that by a factor $\Vert \bar c \Vert_d\leqslant 1$,
in particular $c_n\leqslant 1$ for all $n$.

Define recursively $n_0=0$ and 
\[n_{i+1}=\max\left\{n \,\middle|\, \sum_{k=n_i+1}^n c_n^{d-1}\leqslant 2\right\}\]
It is possible that $n_i=+\infty$ for some $i$; let us momentarily
assume it is not. 

We then have $\sum_{k=n_i+1}^n c_n^{d-1}\geqslant 1$, and
\begin{eqnarray*}
1 &>& \sum_{k=1}^n c_k^d =\sum_{i=0}^{\infty} \sum_{k=n_i+1}^{n_{i+1}} c_k^d\\
        &\geqslant& \sum_{i=0}^{\infty} c_{n_{i+1}} 
                    \sum_{k=n_i+1}^{n_{i+1}} c_k^{d-1}
        \geqslant \sum_{i=0}^{\infty} c_{n_{i+1}}
\end{eqnarray*}
In other words, we have divided
the terms of $\bar c$ into groups of uniformly bounded $\ell^{d-1}$ norm,
in such a way that the sequence of first terms of the groups is $\ell^1$. 
Of course, if $n_i$ is infinite for some $i$, then we have the same conclusion
with one group that is infinite.

Consider inside $I^d$ non-overlapping slices of the form 
$S_i=I^{d-1}\times [a_i,b_i]$ such that $|b_i-a_i|>c_{n_i}$.
By induction on $d$, for all $i$ we can find subcubes of $I^{d-1}$
of side length equal (up to a constant depending only on $d$) to
$c_{n_i+1}, c_{n_i+2},\dots, c_{n_{i+1}}$. We can therefore find
$d$-dimensional cubes in $S_i$ of the same sidelengthes, and we are done. 
\end{proof}

Given an $\ell^d$ positive sequence $\bar c$, and up to taking a smaller 
factor $K$ than given in the Lemma, we can find homotheties $h_n$ of ratios
$Kc_n$ such that the cubes $C_n=h_n(I^d)$ are not only disjoint, but satisfy
the following separation property: for all $x,y\in C_n$ and all $z\in C_m$
with $m\neq n$, $d(x,y)<d(x,z)$.

Let $\bar b=(b_n)$ be any positive $\ell^1$ sequence of sum $1$,
let $a_n=b_n^{1/2} c_n$ and consider the map
\begin{eqnarray*}
h : \hcube(I^d;\bar a) &\to& \mathscr{W}_2(I^d) \\
    \bar x & \mapsto & \sum_{n=1}^\infty b_n \delta_{h_n(x_n)}
\end{eqnarray*}
The separation property on the cubes $C_n$ ensures
that the optimal transport plan from $h(\bar x)$ to
$h(\bar y)$ must be the obvious one, namely
$\Pi=\sum_n b_n \delta_{h_n(x_n)}\otimes\delta_{h_n(y_n)}$.
It has cost $\sum_n b_n K^2c_n^2 d^2(x_n,y_n)=K^2d(\bar x,\bar y)^2$. 

The question is now which sequences $\bar a$ can be decomposed
into a product of an $\ell^2$ and an $\ell^d$ sequence.
If $\bar a \in\ell^{2d/(d+2)}$, one can take $\bar b:=\bar a^{2d/(d+2)}$
and $\bar c=\bar a^{2/(d+2)}$, so that $a_n=b_n^{1/2} c_n$ holds and
the sequences have the right summability properties to apply what
precedes. We have proved the following.
\begin{theo}
If $\bar a$ is any positive $\ell^{2d/(d+2)}$ sequence, there is
a map 
\[h:\hcube(I^d;\bar a) \to \mathscr{W}_2(I^d)\]
that is a homothetic embedding in the sense that
$d(h(\bar x),h(\bar y))=Kd(\bar x,\bar y)$ for some constant $K$.
\end{theo}
Another way to put it is that there is a constant $K$ and an isometric
embedding $\hcube(I^d;K\bar a) \to \mathscr{W}_2(I^d)$.

Note that H\"older's inequality shows that one cannot apply
our strategy to sequences not in $\ell^{2d/(d+2)}$. In fact,
as we shall see below,
the upper bound of Theorem \ref{theo:manifolds} shows that
the exponent $2d/(d+2)$ cannot be improved in general, even for
a mere bi-Lipschitz embedding.

\subsection{Largeness of Wasserstein spaces}\label{sec:largeness}

Let us conclude with the proofs of largeness results claimed in the
introduction.

\begin{proof}[Proof of Proposition \ref{prop:largeness1}]
Let $X$ be a compact metric space of positive Hausdorff dimension
and $\lambda\in (0,1/3)$.
By the embedding theorem \ref{theo:HC1}, we have a continuous
sub-Lipschitz embedding 
$\hcube(X;(\lambda^n))\hookrightarrow \mathscr{W}_2(X)$. Proposition
\ref{prop:expdecay} tells us that 
\[\crit_{\mathscr{I}_2} \hcube(X;(\lambda^n))
  \geqslant \frac{\dim X}{2\log\frac1\lambda}\]
and by Lipschitz monotonicity the same holds  
for $\mathscr{W}_2(X)$. Letting $\lambda$ go to $1/3$ finishes the proof.
\end{proof}

Last the lower and upper bounds in Theorem \ref{theo:manifolds}
can be individually stated under more general hypotheses.

\begin{prop}\label{prop:largeness2}
If $X$ contains a bi-Lipschitz image of a Euclidean cube $I^d$, then
$\mathscr{W}_2(X)$ has at least power-exponential size, and more precisely
\[\crit_{\mathscr{P}}\mathscr{W}_2(X)\geqslant d.\] 
\end{prop}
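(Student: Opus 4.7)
The plan is to combine the embedding Theorem \ref{theo:HC2} with the estimate of Proposition \ref{prop:polydecay}, tuning the decay exponent of the defining sequence of the Hilbert cube so as to push the power-exponential critical parameter as close to $d$ as possible.

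More precisely, fix $\alpha > \frac{d+2}{2d} = \frac{1}{2} + \frac{1}{d}$. Then the sequence $(n^{-\alpha})_n$ is $\ell^{2d/(d+2)}$, so Theorem \ref{theo:HC2} (whose hypothesis holds by assumption on $X$) provides a bi-Lipschitz embedding
\[ \hcube(I^d,(n^{-\alpha})) \hookrightarrow \mathscr{W}_2(X). \]
By the Lipschitz monotonicity of $\crit_\mathscr{F}$, this yields
\[ \crit_\mathscr{P}\mathscr{W}_2(X) \;\geqslant\; \crit_\mathscr{P}\hcube(I^d,(n^{-\alpha})). \]
Since $I^d$ has positive Hausdorff dimension and $\alpha > 1/2$, Proposition \ref{prop:polydecay} applies and gives
\[ \crit_\mathscr{P}\hcube(I^d,(n^{-\alpha})) \;\geqslant\; \frac{2}{2\alpha-1}. \]

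To conclude, it remains only to let $\alpha$ decrease to $\frac{d+2}{2d}$. The right-hand side then tends to
\[ \frac{2}{2\cdot\frac{d+2}{2d}-1} \;=\; \frac{2}{\frac{d+2}{d}-1} \;=\; \frac{2}{2/d} \;=\; d, \]
so that $\crit_\mathscr{P}\mathscr{W}_2(X) \geqslant d$, as desired.

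The proof is essentially a bookkeeping exercise once the two main ingredients (Theorem \ref{theo:HC2} and Proposition \ref{prop:polydecay}) are granted; the only point that deserves attention is matching the summability threshold $2d/(d+2)$ coming from the embedding of Hilbert cubes with the polynomial-decay threshold $\alpha > 1/2$ from the critical parameter estimate, so that the limit of $2/(2\alpha-1)$ matches the target value $d$ exactly. This matching is what makes the exponent $2d/(d+2)$ of Theorem \ref{theo:HC2} look sharp here, consistently with the remark in the previous subsection.
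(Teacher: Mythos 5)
Your proof is correct and is essentially identical to the paper's own argument: embed $\hcube(I^d,(n^{-\alpha}))$ into $\mathscr{W}_2(X)$ via Theorem \ref{theo:HC2} for each $\alpha>\frac{d+2}{2d}$, invoke Proposition \ref{prop:polydecay} and Lipschitz monotonicity, then let $\alpha\to\frac{d+2}{2d}$ so that $\frac{2}{2\alpha-1}\to d$. The summability check and the limit computation are both carried out correctly.
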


\begin{proof}
According to Theorem \ref{theo:HC2}, there is a bi-Lipschitz embedding
from $\hcube(I^d;(n^{-\alpha}))$ to $\mathscr{W}_2(X)$ for
all $\alpha>(d+2)/(2d)$. Proposition \ref{prop:polydecay} tells that 
$\hcube(I^d;(n^{-\alpha}))$ has power-exponential critical parameter
bounded below by $2/(2\alpha-1)$, which goes to $d$ when
$\alpha$ approaches $(d+2)/(2d)$. Monotonicity gives the lower bound
for $\mathscr{W}_2(X)$.
\end{proof}

Let us use a counting argument to prove the following.
\begin{prop}\label{prop:upper}
If $X$ is a compact metric space of finite upper Min\-kow\-ski dimension $d$, then
$\mathscr{W}_2(X)$ has at most power-exponential size, and more precisely
\[\crit_{\mathscr{P}}\mathscr{W}_2(X)\leqslant d.\]
\end{prop}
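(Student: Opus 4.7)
The strategy is to bound the Minkowski critical parameter $\lcrit_{\mathscr{P}}\mathscr{W}_2(X)$ from above, since the general inequality $\crit_{\mathscr{F}}(A)\leqslant\lcrit_{\mathscr{F}}(A)$ then gives the proposition for free. So I want to estimate $N(\mathscr{W}_2(X),\varepsilon)$ when $\varepsilon\to 0$, and show that for every $s>d$ the product $N(\mathscr{W}_2(X),\varepsilon)\exp(-(1/\varepsilon)^s)$ goes to $0$.

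The plan is to approximate any $\mu\in\mathscr{W}_2(X)$ by a finitely supported, rationally weighted measure in two independent steps. First, fix $s'\in(d,s)$ and, using the hypothesis on the upper Minkowski dimension of $X$, choose an $\eta$-net $\{x_1,\dots,x_N\}\subset X$ with $N\leqslant C\eta^{-s'}$. Transporting mass to the nearest net point gives a measure $\mu'=\sum p_i\delta_{x_i}$ with $\wassd_2(\mu,\mu')\leqslant\eta$. Second, round each $p_i$ to the nearest multiple of $\delta$ (adjusting one coordinate to restore total mass $1$); the resulting measure $\mu''$ satisfies $\wassd_2(\mu',\mu'')\leqslant\diam(X)\sqrt{N\delta}$, because the obvious transport plan moves a total mass $O(N\delta)$ across a distance at most $\diam(X)$, with squared cost.

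Now I balance the two errors. Choose $\eta=\varepsilon/2$ and $\delta=\varepsilon^2/(4\diam(X)^2 N)$; then both $\wassd_2$ errors are at most $\varepsilon/2$, so the measures of the form $\mu''$ form an $\varepsilon$-net of $\mathscr{W}_2(X)$. Their number is at most the number of compositions of $1/\delta$ into $N$ non-negative parts, which is $\binom{1/\delta+N-1}{N-1}\leqslant\bigl(e(1/\delta+N)/N\bigr)^N$. With the parameters chosen, $1/(N\delta)=O(\varepsilon^{-2})$, and hence
\[\log N(\mathscr{W}_2(X),\varepsilon)\leqslant N\log\frac{1}{N\delta}+O(N)=O\!\left(\varepsilon^{-s'}\log\frac1\varepsilon\right).\]
Since $s>s'$, the quantity $\varepsilon^{-s}-O(\varepsilon^{-s'}\log(1/\varepsilon))$ tends to $+\infty$, so $N(\mathscr{W}_2(X),\varepsilon)\exp(-(1/\varepsilon)^s)\to 0$. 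This shows $\lcrit_{\mathscr{P}}\mathscr{W}_2(X)\leqslant s$ for every $s>d$, which is the claim.

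The only delicate point is the weight-rounding step: one must be careful that the total variation error $O(N\delta)$ translates into a $\wassd_2$ error of order $\sqrt{N\delta}$ rather than $N\delta$, which is exactly what makes the choice of $\delta$ affordable. Once this is understood, the combinatorial count of rationally weighted measures automatically produces the exponent $d$, because the number $N$ of grid points already captures the metric complexity of $X$, while the refinement of weights only contributes a polylogarithmic factor $\log(1/\varepsilon)$.
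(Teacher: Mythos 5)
Your proof is correct and follows essentially the same route as the paper: discretize the support to an $\varepsilon$-net of size $O(\varepsilon^{-d'})$, discretize the weights to precision $\sim\varepsilon^{2}$, and exploit the key fact that an $\ell^1$ (total variation) perturbation of mass $\sigma$ costs only $\diam(X)\sqrt{\sigma}$ in $\wassd_2$ --- the paper packages this as a map $\mu\mapsto(\mu(B_i))_i$ into $I^D$ followed by a covering of the cube, whereas you enumerate lattice points of the simplex directly, but the resulting count $\log N(\mathscr{W}_2(X),\varepsilon)=O(\varepsilon^{-d'}\log\frac1\varepsilon)$ and the conclusion via the Minkowski critical parameter are identical.
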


\begin{proof}
Fix some $d'>d$;
by assumption, for all small enough $\varepsilon$
it is possible to cover $X$ by $D=(1/\varepsilon)^{d'}$ balls 
$(B_i)$ of diameter $\varepsilon$. By taking intersections with complements, we can
instead assume that $B_i$'s are disjoint Borel sets of diameters
at most $\varepsilon$. Consider the map
\begin{eqnarray*}
m : \mathscr{W}_2(X) &\to& I^D \\
    \mu &\mapsto& (\mu(B_i))_i
\end{eqnarray*}
and endow $I^D$ with the $\ell^1$ metric.
The map $m$ is not continuous, but whenever $E\subset I^D$ has diameter
at most $\sigma$, we have
\[\diam m^{-1}(E) \leqslant (\diam X)\sqrt{\sigma}+\varepsilon.\]
Indeed, given two measures $\mu$, $\nu$ such that 
$\Vert m(\mu)-m(\nu)\Vert_1\leqslant \sigma$,
we can first move an amount of mass $\sigma$ of $\mu$ (by a distance at most
$\diam X$) to get a measure $\mu'$ that has the same images as $\nu$ under $m$,
then consider any transport plan from $\mu'$ to $\nu$ that is supported
on $\cup B_i\times B_i$ (that is, move mass only inside each $B_i$). This
last transport plan has cost at most $\varepsilon^2$ and the triangular inequality
provides the claimed bound.

Now, for all $D'>D$ and assuming $\varepsilon$ is small enough, it is possible
to cover $I^D$ by $(1/\varepsilon)^{2D'}$ balls $(E_j)$ of diameter
at most $\varepsilon^2$. We get a covering $(\overline{m^{-1}(E_j)})_j$
of $\mathscr{W}_2(X)$ by $(1/\varepsilon)^{2D'}$ sets of diameters at most 
$(\diam X+1)\varepsilon$. Writing $D'=D+\eta/2$, it comes
\[N(\mathscr{W}_2(X),(\diam X+1)\varepsilon) \leqslant
  e^{\left(2\left(\frac1\varepsilon\right)^{d'}+\eta\right)\log\frac1\varepsilon}\]
so that $\ucrit_{\mathscr{P}}\mathscr{W}_2(X) \leqslant d''$
for all $d''>d'>d$, and we are done.
\end{proof}

Now Theorem \ref{theo:manifolds} follows: $X$ being a manifold, it
has upper Minkowski dimension $d$ and contains a bi-Lipschitz image
of $I^d$, so both bounds apply.

\section{Largeness of subsets sets}\label{sec:Hausdorff}

In this section, we briefly explain how to deduce Theorem \ref{theo:Hausdorff}
using the same methods than above.

Let us recall that, when $X$ is a compact metric space, 
$\mathscr{C}(X)$ denotes the set of all closed subsets of $X$,
endowed with the Hausdorff metric.

Generalizing Hilbert cubes, whenever $\bar a=(a_n)_n$
is a sequence of positive reals such that $\lim_n a_n =0$,
let us denote by $\bcube(X,\infty,\bar a)$ the space
$X^{\mathbb{N}}$ endowed with the metric
\[d_{\bar a}(\bar x,\bar y) = \sup_n a_n d(x_n,y_n)\]
Such a space shall be called a \emph{Banach cube} while of
course, topologically it is a Hilbert cube. One can similarly define Banach 
cubes $\bcube(X,p,\bar a)$ for any $p\in[1,\infty]$, but we do not need
that level of generality.
The methods we used to measure the size of Hilbert cubes are easily
generalized to Banach cubes.

\begin{prop}\label{prop:bc}
Let $Y$ be a compact metric space of positive Hausdorff dimension and finite
upper Minkowski dimension.
Then for all positive $\alpha$, it holds
\[\crit_{\mathscr{P}}\bcube(Y,\infty,(n^{-\alpha}))= \frac1\alpha\]
\end{prop}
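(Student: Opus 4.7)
The plan is to imitate the proof of Proposition \ref{prop:polydecay}, taking advantage of the fact that the sup-type metric on $\bcube(Y,\infty,\bar a)$ makes open balls literal products:
\[B(\bar x, r) = \prod_{n=1}^{\infty} B(x_n, r n^\alpha),\]
where for $n$ so large that $r n^\alpha \geqslant \diam Y$ the factor equals $Y$. This product structure lets us avoid the Wallis-integral maneuver needed for the Hilbert cube, so both the upper and lower bound reduce to single-variable estimates.

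For the upper bound, taking a product of minimal $\varepsilon$-covers one gets
\[N(\bcube(Y,\infty,(n^{-\alpha})),\varepsilon) \leqslant \prod_{n=1}^{L(\varepsilon)} N(Y,\varepsilon n^\alpha),\]
where $L(\varepsilon)$ is the smallest $n$ with $\varepsilon n^\alpha \geqslant \diam Y$, so $L(\varepsilon)\sim(\diam Y/\varepsilon)^{1/\alpha}$. For any $d'>\udim Y$ we may replace each factor by $C (\varepsilon n^\alpha)^{-d'}$; taking logs, using Stirling on $\log L(\varepsilon)!$ and choosing the truncation $L(\varepsilon)$ so that the two dominant contributions ($d' L(\varepsilon) \log(1/\varepsilon)$ and $-d' \alpha \log L(\varepsilon)!$) cancel to leading order, one is left with
\[\log N(\bcube(Y,\infty,(n^{-\alpha})),\varepsilon) = O\!\left(\varepsilon^{-1/\alpha}\right),\]
which immediately yields $\ucrit_{\mathscr{P}}\bcube(Y,\infty,(n^{-\alpha}))\leqslant 1/\alpha$.

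For the lower bound, pick any $t\in(0,\dim Y)$ and apply Frostman's lemma on $Y$ to obtain a probability measure $\nu$ on $Y$ with $\nu(B(x,r))\leqslant C r^t$ for all $r>0$. Put $\mu=\bigotimes_{n\geqslant 1}\nu$ on the Banach cube. By the product form of balls,
\[\mu(B(\bar x,r))=\prod_{n=1}^{\infty}\nu(B(x_n,r n^\alpha))\leqslant \prod_{n=1}^{n^*}C(r n^\alpha)^t,\]
where $n^*=n^*(r)$ is the largest $n$ such that $C(r n^\alpha)^t\leqslant 1$ (for $n>n^*$ the trivial bound $1$ is used). A direct computation, again using Stirling on $\log n^*!$, shows that the main terms $-t n^* \log(1/r)$ and $t\alpha\log n^*!$ cancel to leading order, leaving
\[\log\mu(B(\bar x,r))\leqslant -E \, r^{-1/\alpha}+O(\log(1/r))\]
for an explicit constant $E=t\alpha C^{-1/(t\alpha)}>0$. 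For any $s<1/\alpha$ this majorizes $\log f_s(r)=-r^{-s}$ when $r$ is small, so Frostman's lemma gives $\crit_{\mathscr{P}}\bcube(Y,\infty,(n^{-\alpha}))\geqslant s$, hence $\geqslant 1/\alpha$ by letting $s\to 1/\alpha$.

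The only delicate point is the bookkeeping in the two Stirling estimates: in both directions, the two naturally appearing extensive terms cancel at top order, so one must choose the truncation $L(\varepsilon)$, respectively $n^*(r)$, precisely in terms of the Frostman/Minkowski constant to guarantee the residual $O(L)$ (resp.\ $O(n^*)$) term has the correct sign and produces the target exponent $1/\alpha$. No new ideas beyond those in Proposition \ref{prop:polydecay} are needed; in fact the sup metric makes the argument strictly cleaner.
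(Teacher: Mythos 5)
Your proposal is correct and follows essentially the same route as the paper: a product Frostman measure with a truncated product bound for the lower estimate, and a product of coverings truncated where the scaled factors become $\varepsilon$-small for the upper estimate. The only difference is bookkeeping: the paper sidesteps your delicate Stirling cancellation by truncating at the slightly suboptimal scale $r^{-1/\beta}$ with $\beta>\alpha$ (then letting $\beta\to\alpha$) and by accepting a harmless extra $\log\frac1\varepsilon$ factor in the covering count, both of which are innocuous in the power-exponential scale, whereas you take the exact cutoff and verify that the extensive terms cancel — also valid, and marginally sharper.
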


\begin{proof}
Up to a dilation, we can assume that $Y$ has unit diameter.
Using Frostman's lemma, given any $s\leqslant \dim Y$, there is
a measure $\nu$ on $Y$ and a constant $C$ such that $\nu(B(y,r)\leqslant Cr^s$
for all $y\in Y$ and all $r>0$. Denote by $\mu$ the product measure
$\otimes \nu$ on $B:=\bcube(Y,\infty,(n^{-\alpha}))\simeq Y^{\mathbb{N}}$.
Choose any $\beta>\alpha$ and let $N$ be an integer-valued function 
such that $N(r)\sim r^{-1/\beta}$.
Then for all $\bar y\in B$, and $r>0$ we have
\begin{eqnarray*}
\mu(B(\bar y, r) &=& \prod B(y_n,n^\alpha r) \\
  &\subset& \prod_{n=1}^{N(r)} B(y_n,n^\alpha r) \times Y^{\mathbb{N}}
\end{eqnarray*}
and a quick computation shows that there is a constant $D$ such that
\[\log\mu(B(\bar y, r))\leqslant -D \left(\frac1r\right)^{\frac1\beta}\log\frac1r\]
so that $\crit_{\mathscr{P}} B \geqslant \frac1\beta$. Letting $\beta\to\alpha$,
we have the desired lower bound.

The upper bound is obtained as usual using the upper Minkowski critical parameter.
There is an integer-valued function $M$ such that 
$M(\varepsilon)\sim \varepsilon^{-1/\alpha}$ and $\diam Y_{n^-\alpha}\leqslant \varepsilon$
for all $n\geqslant M(\varepsilon)$. Writting $B=\prod Y_{n^-\alpha}$
and covering each of the terms by $C\varepsilon^{-d}$ balls of diameter
$\varepsilon$, where $C,d$ are constants depending on $Y$, we see that
$B$ can be covered by at most
\[\left(\frac1\varepsilon\right)^{dM(\varepsilon)}\]
balls of diameter $\varepsilon$, and the result follows.
\end{proof}

Now we can deduce the first part of Theorem \ref{theo:Hausdorff}.
\begin{prop}\label{prop:lowerHausdorff}
If $X$ contains a bi-Lipschitz image of a Euclidean cube $I^d$, then
$\mathscr{C}(X)$ has at least power-exponential size, and more precisely
\[\crit_{\mathscr{P}}\mathscr{C}(X)\geqslant d.\] 
\end{prop}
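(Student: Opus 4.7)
The plan is to mimic the strategy used for Wasserstein spaces (Proposition \ref{prop:largeness2}), replacing the isometric embedding of a Hilbert cube by an isometric embedding of a \emph{Banach} cube into $\mathscr{C}(X)$, and then invoking Proposition \ref{prop:bc} in place of Proposition \ref{prop:polydecay}.

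First, assume for simplicity that $X$ contains an isometric copy of $I^d$ (the bi-Lipschitz case follows by composing with the distortion). Fix any positive $\ell^d$ sequence $\bar c=(c_n)$; applying Lemma \ref{lemm:cubes} and then shrinking the common factor $K$ exactly as in the proof of Theorem \ref{theo:HC2}, I obtain homotheties $h_n:I^d\to I^d \hookrightarrow X$ of ratio $Kc_n$ whose images $C_n=h_n(I^d)$ are pairwise disjoint and satisfy the strong separation property: for all $n$, all $x,y\in C_n$ and all $z\in C_m$ with $m\neq n$, $d(x,y)<d(x,z)$. Then I define
\[ h:\bcube(I^d,\infty,(Kc_n))\to\mathscr{C}(X),\qquad
   h(\bar x)=\overline{\{h_n(x_n)\mid n\in\mathbb{N}\}}.\]

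The next step is to check that $h$ is an isometric embedding. For the upper bound on the Hausdorff distance $d_H(h(\bar x),h(\bar y))$, note that each $h_n(x_n)$ has $h_n(y_n)$ at distance $Kc_n d(x_n,y_n)\leqslant \sup_m Kc_m d(x_m,y_m)=d_{K\bar c}(\bar x,\bar y)$; any accumulation point of $\{h_n(x_n)\}$ is also an accumulation point of $\{h_n(y_n)\}$ since $\diam C_n\to 0$, so the closure causes no harm. For the lower bound, the separation property says that for each $n$, $d(h_n(x_n),h_m(y_m))>d(h_n(x_n),h_n(y_n))$ whenever $m\neq n$; passing to the closure only preserves the inequality (with $\geqslant$), so $d(h_n(x_n),h(\bar y))=Kc_n d(x_n,y_n)$ for every $n$. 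Taking the supremum over $n$ gives $d_H(h(\bar x),h(\bar y))\geqslant d_{K\bar c}(\bar x,\bar y)$.

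Once $h$ is a sub-Lipschitz embedding, the rest is mechanical. Specialising $\bar c=(n^{-\alpha})$ for $\alpha>1/d$ (so $\bar c\in\ell^d$), Proposition \ref{prop:bc} applied to $Y=I^d$ gives
\[\crit_{\mathscr{P}}\bcube(I^d,\infty,(n^{-\alpha}))=\frac{1}{\alpha},\]
and Lipschitz monotonicity of $\crit_{\mathscr{P}}$ yields $\crit_{\mathscr{P}}\mathscr{C}(X)\geqslant 1/\alpha$. Letting $\alpha\to 1/d$ concludes the proof.

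The one delicate point is handling accumulation points inside the closures $h(\bar x)$ and $h(\bar y)$: I expect the argument sketched above (using that $\diam C_n\to 0$ so accumulation points of both closures lie at the common accumulation points of the $C_n$'s) to suffice, but this is the place where the proof must be written carefully rather than by analogy with the Hilbert-cube/Wasserstein case, where no closure operation was involved.
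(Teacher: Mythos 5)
Your proof is correct and follows essentially the same route as the paper: embed a Banach cube $\bcube(I^d,\infty,(n^{-\alpha}))$ into $\mathscr{C}(X)$ via separated homothetic subcubes obtained from Lemma \ref{lemm:cubes}, then apply Proposition \ref{prop:bc} and Lipschitz monotonicity, letting $\alpha\to 1/d$. Your exponent bookkeeping ($\alpha>1/d$ so that $(n^{-\alpha})\in\ell^d$ and $1/\alpha\to d$) is in fact more careful than the paper's (which writes ratios $Cn^{-d'}$ with $d'>d$), and your treatment of the accumulation points in the closures --- $\diam C_n\to 0$ for the upper bound, the separation property for the lower bound --- is exactly what is needed.
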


\begin{proof}
Using Lemma \ref{lemm:cubes}, for all $d'>d$
there are homotheties $(h_n)_{n\in\mathbb{N}}$ of ratio
$C n^{-d'}$ from $I^d$ to $I^d$, with $h_n(I^d)$ and
$h_m(I^d)$ separated by a distance at least $C n^{-d'}$ for all $n,m$.
Then the map
\begin{eqnarray*}
\bcube(I^d,\infty,(n^{-d'})) &\to& \mathscr{C}(I^d) \\
(x_1,x_2,\dots) &\mapsto& \overline{\{h_n(x_n) \,|\, n\in\mathbb{N}\}}
\end{eqnarray*}
defines an homothetic embedding of ratio $C$.

Proposition \ref{prop:bc} and the monotonicity property gives the result.
\end{proof}

Finally, the following results ends the proof of Theorem \ref{theo:Hausdorff}.
It is proved just like its Wasserstein analogue.
\begin{prop}\label{prop:upperHausdorff}
If $X$ is a compact metric space of finite upper Min\-kow\-ski dimension $d$, then
$\mathscr{C}(X)$ has at most power-exponential size, and more precisely
\[\crit_{\mathscr{P}}\mathscr{C}(X)\leqslant d.\]
\end{prop}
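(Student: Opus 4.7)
The plan is to adapt the proof of the Wasserstein analogue, Proposition \ref{prop:upper}, using the inequality $\crit_{\mathscr{P}} \leqslant \ucrit_{\mathscr{P}}$ together with the fact that the measure-encoding map there has a natural (and strictly simpler) set-theoretic counterpart. First I would fix any $d' > d$; the definition of upper Minkowski dimension guarantees that for all sufficiently small $\varepsilon > 0$, $X$ can be covered by at most $D \leqslant (1/\varepsilon)^{d'}$ pairwise disjoint Borel pieces $B_1,\ldots, B_D$ of diameter at most $\varepsilon$, disjointness being arranged as before by subtracting earlier sets.

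Next I would replace the map $\mu \mapsto (\mu(B_i))_i$ by the Boolean incidence map
\[
m : \mathscr{C}(X) \to \{0,1\}^D, \qquad F \mapsto \big(\mathbf{1}_{F \cap B_i \neq \emptyset}\big)_{i=1}^D.
\]
The advantage over the Wasserstein setting is that the target is already finite, so no secondary $\varepsilon^2$-covering of a Euclidean cube is required.

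The heart of the argument is the diameter bound on the fibers of $m$. If $F,G \in \mathscr{C}(X)$ satisfy $m(F) = m(G)$, then for every $x \in F$ there is an index $i$ with $x \in B_i$ and $G \cap B_i \neq \emptyset$, whence $d(x,G) \leqslant \diam B_i \leqslant \varepsilon$; by symmetry the Hausdorff distance $d_H(F,G)$ is at most $\varepsilon$. Consequently $\mathscr{C}(X)$ is covered by at most $2^D \leqslant \exp\!\big((\log 2)(1/\varepsilon)^{d'}\big)$ sets of diameter at most $\varepsilon$.

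Finally, testing this bound against the scale $\mathscr{P}$ and picking any $s > d'$ shows that $N(\mathscr{C}(X), \varepsilon) \exp(-(1/\varepsilon)^s)$ tends to $0$ as $\varepsilon \to 0$, so $\ucrit_{\mathscr{P}} \mathscr{C}(X) \leqslant s$; letting $s \downarrow d' \downarrow d$ and invoking $\crit_{\mathscr{P}} \leqslant \ucrit_{\mathscr{P}}$ concludes. I do not anticipate any real obstacle: because $m$ takes values in a finite set, the argument is strictly cleaner than its Wasserstein counterpart, the only non-trivial verification being the symmetric fiber-diameter estimate for $d_H$.
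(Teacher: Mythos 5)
Your proposal is correct and follows essentially the same route as the paper: the same disjoint Borel decomposition, the same Boolean incidence map into $\{0,1\}^D$, and the same conclusion via the upper Minkowski critical parameter. The only difference is that you spell out the fiber-diameter estimate for the Hausdorff metric, which the paper merely asserts.
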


\begin{proof}
Fix some $d'>d$;
for all small enough $\varepsilon$
it is possible to cover $X$ by $D=(1/\varepsilon)^{d'}$ disjoint Borel sets 
$(B_i)$ of diameter at most $\varepsilon$.
The map
\begin{eqnarray*}
m : \mathscr{C}(X) &\to& \{0,1\}^D \\
   A &\mapsto& (m_i(A))_i
\end{eqnarray*}
defined by $m_i(A)=1$ if and only if $A\cap B_i \neq \emptyset$
has the property that every point in $\{0,1\}^D$ has an inverse image
of diameter at most $\varepsilon$.

We get a covering 
of $\mathscr{C}(X)$ by $2^D$ sets of diameters at most 
$\varepsilon$, and it follows
\[\ucrit_{\mathscr{P}}\mathscr{C}(X)\leqslant d'.\]
This being valid for all $d'>d$, we get the desired result.
\end{proof}

\bibliographystyle{smfalpha}
\bibliography{biblio.bib}

\enlargethispage{2\baselineskip}

\end{document}